\newtheorem{condition**}{A*}
\newtheorem{condition***}{C*}
\newtheorem{condition*}{C}
\newtheorem{proposition}{Proposition}[section]
\newtheorem{definition}{Definition}[section]
\newtheorem{theorem}{Theorem}[section]
\newtheorem{assumption}{Assumption}[section]
\renewcommand{\vec}{\mathrm{vec}}
\newcommand{\var}{\mathrm{Var}}
\newcommand{\cD}{\mathcal D}
\newcommand{\cS}{\mathcal S}
\newcommand{\cF}{\mathcal F}
\newcommand{\cU}{\mathcal U}
\newcommand{\cX}{\mathcal X}
\newcommand{\cJ}{\mathcal J}
\newcommand{\cT}{\mathcal T}
\newcommand{\cK}{\mathcal K}
\newcommand{\cI}{\mathcal I}
\newcommand{\cR}{\mathcal R}
\newcommand{\bbF}{\mathbb F}
\newcommand{\bbR}{\mathbb R}
\newcommand{\bbP}{\mathbb P}
\newcommand{\bbN}{\mathbb N}
\newcommand{\E}{\mathbb E}
\newcommand{\barA}{\overline{A}}
\newcommand{\barB}{\overline{B}}
\newcommand{\barC}{\overline{C}}
\newcommand{\barD}{\overline{D}}
\newcommand{\barQ}{\overline{Q}}
\newcommand{\barR}{\overline{R}}
\newcommand{\barS}{\overline{S}}
\newcommand{\bfQ}{\mathbf Q}
\newcommand{\bfR}{\mathbf R}
\newcommand{\bfS}{\mathbf S}
\newcommand{\hatA}{\widehat A}
\newcommand{\hatB}{\widehat B}
\newcommand{\hatC}{\widehat C}
\newcommand{\hatD}{\widehat D}
\newcommand{\hatR}{\widehat R}
\newcommand{\hatS}{\widehat S} 
\newcommand{\hatQ}{\widehat Q} 
\newcommand{\hatK}{\widehat K}
\newcommand{\hatP}{\widehat P} 
\newcommand{\loc}{\mathrm{loc}} 
\newcommand{\dd}{\ensuremath{\operatorname{d}\!}}
\newcommand{\ds}{\ensuremath{\operatorname{d}\! s}}
\newcommand{\dw}{\ensuremath{\operatorname{d}\! W}}
\def\BibTeX{{\rm B\kern-.05em{\sc i\kern-.025em b}\kern-.08em
 T\kern-.1667em\lower.7ex\hbox{E}\kern-.125emX}}
\begin{document}

\title{Policy Iteration Reinforcement Learning Method for Continuous-Time Linear-Quadratic Mean-Field Control Problems}

\author{Na Li, Xun Li, and Zuo Quan Xu
\thanks{N. Li acknowledges the financial support from the NSFC (No.~12171279, No.~11801317), the Consulting and Research Project of China Engineering Science and Technology Development Strategy Shandong Research Institute (No.~202302SDZD04), and the International Cooperation Research Platform of Shandong University of Finance and Economics. X. Li acknowledges the financial support from the Research Grants Council of Hong Kong (under grants No.~15216720, No.~15221621, No.~15226922), and PolyU 1-ZVXA, 4-ZZLT and 4-ZZP4. Z. Q. Xu acknowledges the financial support from the NSFC (No. 11971409), Hong Kong RGC (GRF 15202421, 15204622 and 15203423), The PolyU-SDU Joint Research Center on Financial Mathematics, The CAS AMSS-PolyU Joint Laboratory of Applied Mathematics, The Research Centre for Quantitative Finance (1-CE03), and internal grants from The Hong Kong Polytechnic University. Corresponding author: Na Li.}
\thanks{N. Li, School of Statistics and Mathematics,
Shandong University of Finance and Economics, Jinan, Shandong, 250014, China. (e-mail: naibor@163.com)}
\thanks{X. Li, Department of Applied Mathematics, The Hong Kong Polytechnic University, Hong Kong, China. (e-mail: li.xun@polyu.edu.hk)}
\thanks{Z. Q. Xu, Department of Applied Mathematics, The Hong Kong Polytechnic University, Hong Kong, China. (e-mail: maxu@polyu.edu.hk)}}

\maketitle

\begin{abstract}
This paper employs a policy iteration reinforcement learning (RL) method to study continuous-time linear-quadratic mean-field control problems in infinite horizon. The drift and diffusion terms in the dynamics involve the states, the controls, and their conditional expectations. We investigate the stabilizability and convergence of the RL algorithm using a Lyapunov Recursion. Instead of solving a pair of coupled Riccati equations, the RL technique focuses on strengthening an auxiliary function and the cost functional as the objective functions and updating the new policy to compute the optimal control via state trajectories. A numerical example sheds light on the established theoretical results. 
\end{abstract}

\begin{IEEEkeywords}
Mean-field optimal problem, linear-quadratic problem, reinforcement learning, policy iteration.
\end{IEEEkeywords}
\section{Introduction}
The mean-field (MF) problems have important applications in various fields, including, but not limited to, science, engineering, financial management, and economics. 
Since the independent introduction by Lasry and Lions \cite{Lasry-Lions} and Huang {\em et al.} \cite{Huang-Caines-Malhame-2007}, there has been increasing interest in the studying of MF problems as well as addressing their applications. As pointed out by Bensoussan {\em et al.} \cite{Bensoussan-Frehse-Yam-2013}, MF games (MFG) and MF control (MFC) bring new problems in control theory. In MFGs, the MF term is considered an external given so that the agent does not influence it. Because of this, MFGs can be tackled by first solving a standard control problem and then finding an equilibrium. In MFC problems, by contrast, the agent can influence the MF term so that they are not standard control problems in the form of \cite{Yong-Zhou-1999}. As stated in Yong \cite{Yong-2017}, people might like to have the optimal control and state to be not too ``random". To achieve that, one could include a variation of the state process and/or variation of the control process in the cost functional.
As an important class of optimal control problems, linear quadratic (LQ) problems with MF terms have attracted extensive research attention. Yong \cite{Yong-2013} presented the feedback representation for the optimal control of deterministic coefficient MFC-LQ problems by two Riccati differential equations, which are uniquely solvable under certain conditions. Further, Yong \cite{Yong-2017} investigated the time-inconsistent feature of MFC-LQ problems by giving pre-commitment and time-consistent solutions. Moreover, in the infinite horizon, Huang {\em et al.} \cite{Huang-Li-Yong-2015} studied several different stabilizabilities of MFC-LQ problems. They solved a kind of {\em generalized algebraic Riccati equations} (GAREs) by the semi-definite programming (SDP) method involving all the coefficients of the dynamical system, which can be regarded as an extension of Yong \cite{Yong-2013}. Subsequently, Shi {\em et al.} \cite{Li-Shi-Yong-2021} developed the results established in Huang {\em et al.} \cite{Huang-Li-Yong-2015} to the settings of non-zero-sum games. 

In 1954, Minsky \cite{Minsky-1954} initiated the reinforcement learning (RL) concept.
In recent years, many scholars have devoted themselves to RL research for deterministic optimal control problems, see 
Bradtke {\em et al.} \cite{Bradtke-Ydestie-Barto-1994}, 
Lewis {\em et al.} \cite{Lewis-Vrabie-Vamvoudakis-2012}, Chen {\em et al.} \cite{Chen-Qu-Tang-Low-Li-2022}, and so on. Despite many difficulties compared to the deterministic case, there has been growing academic interest in RL techniques for studying stochastic optimal control problems. Wang {\em et al.} \cite{Wang-Zariphopoulou-Zhou-2019} devised an {\em exploratory formulation} for a nonlinear stochastic system to capture learning under exploration, which is a revitalization of the classical stochastic relaxed control. Based on \cite{Wang-Zariphopoulou-Zhou-2019}, there are a series of follow-up works: Wang and Zhou \cite{Wang-Zhou-2019} presented the best trade-off between exploration and exploitation and devised an implementable RL algorithm to solve the mean-variance portfolio problem. Gao {\em et al.} \cite{Gao-Xu-Zhou-2022} studied the temperature control problem for Langevin diffusions non-convex optimization by the stochastic relaxed control and gave a Langevin algorithm based on the Euler-Maruyama discretization of stochastic differential equation (SDE). Different from the exploratory formulation, Li {\em et al.} \cite{Li-Li-Peng-Xu-2022} introduced a partial model-free RL method based on Bellman's dynamic programming principle for a kind of It\^o type LQ optimal control. Some other recent RL approach works in stochastic optimal control refer to Bian {\em et al.} \cite{Bian-Jiang-Jiang-2016}, Du {\em et al.} \cite{Du-Meng-Zhang-2020}, and so on.

Recently, the study of MFGs by the RL method has attracted much attention. Lauri{\`e}re {\it et al.} \cite{Lauriere et al.-2022} gave two deep RL methods for dynamic MFGs: One learns a mixed strategy by historical data, and the other is an online mixing method based on regularization without memorizing historical data or previous estimates. Perrin {\em et al.} \cite{Perrin et al.-2021} obtained a Nash equilibrium by deep RL and normalizing flows, in which the agents adapted their velocity to match the neighboring flock's average one. Elie {\em et al.} \cite{Elie et al.-2020} gave model-free learning algorithms towards non-stationary MFG equilibria relying on some classical assumptions for multi-agent systems. Xu {\em et al.} \cite{Xu-Shen-Huang-2023} presented a model-free method based on the Nash certainty equivalence-based strategy to obtain $\varepsilon$-Nash equilibria for a kind of MFGs. Their RL algorithm measures data from a selected agent as reinforcement signals. 

On the other hand, there is little literature on RL methods to study MFC problems. In reality, many phenomena in engineering control problems are involved in the MF term. For example, uncertain factors and average performances affect autonomous vehicles, energy-efficient buildings, and renewable energy, which can be modeled by the stochastic system involving the MF term. Moreover, when considering the comfort of vehicle driving, the steady temperature of the building, and the risk from uncertainties of renewable energy, one needs to minimize the variance so that the cost involves the MF term as well. In some cases, the controller may only know some information about the underlying systems but only state trajectories, which poses difficulties in finding the optimal control. Motivated by the above practical problems, we devoted ourselves in this paper to developing an RL algorithm to solve a kind of MFC-LQ problem at an infinite scale, in which we can observe data trajectories and only know partial information about the system.

Different from the discrete-time case of MFGs discussed in Lauri{\`e}re {\it et al.} \cite{Lauriere et al.-2022}, Perrin {\em et al.} \cite{Perrin et al.-2021}, and Elie {\em et al.} \cite{Elie et al.-2020}, we intend to study the continuous-time stochastic MFC-LQ problem in the infinite range by RL methods. Similar to Xu {\em et al.} \cite{Xu-Shen-Huang-2023}, we also study the policy iteration RL method based on analyzing a couple of GAREs to obtain the optimal feedback control. 
 The environment changes as time goes by, so the control system has to be modified according to the new information and initial pairs. 
Different from the MFC-LQ problem with expectation $\mathbb E$ in \cite{Huang-Li-Yong-2015}, we consider the conditional expectation $\mathbb E_t$ as the MF term in the system/cost functional. The conditional expectation $\mathbb E_t$ indicates that the MFC-LQ problem is time-inconsistent, and Bellman's dynamic programming principle is invalid. The reason is that the conditional expectation $\mathbb E_t$ makes the state not satisfy the semigroup property. It follows that the optimal control for the initial point $(t,x)$ may not minimize the cost functional for a later point on the optimal trajectory. In this case, we consider the {\it pre-commitment} optimal control studied in Yong \cite{Yong-2017}. Since the problem is considered in the infinite horizon, the stabilizability should be discussed in this paper rather than in \cite{Yong-2017}. 

Here, we devise a policy iteration method to obtain the pre-commitment optimal control, an essential iterative method in reinforcement learning, including {\em policy evaluation} and {\em policy improvement}. Policy evaluation uses the state and the control of the environment to evaluate the reward. Then, policy improvement explores a new control policy using the policy evaluation result. In this procedure, the reward is reinforced progressively, and each policy is better than the previous one. The RL algorithm ends, and the optimal control is obtained until the policy improvement step no longer changes the policy evaluation step. Policies must be stabilizable and convergent based on alternate iterations of these two steps. 

The main contributions of this paper include: 

(i) Algorithm Aspect: Different from \cite{Li-Li-Peng-Xu-2022}, conditional expectations of state and control complicate the RL method to obtain the pre-commitment optimal control. One obstacle to solving our problem is that the Bellman equation in \cite{Li-Li-Peng-Xu-2022} is invalid; another one is that the value function $V(x)=\langle\hatP x,x\rangle$ involves only $\hatP$, not $P$. To solve $(P,\hatP)$ by state trajectories rather than coupled GAREs, we creatively construct an {\em auxiliary function} and combine the {\em cost functional} as the objective functions for evaluating the reward to calculate $P^{(i+1)}$ and $\hatP^{(i+1)}$, respectively. One virtue of this algorithm is that we can calculate $P^{(i+1)}$ and $\hatP^{(i+1)}$ independently although the GAREs are coupled. Another virtue is that this method uses only partial coefficients of the system and does not need to calculate GAREs themselves, which differs from the SDP method in Huang {\em et al.} \cite{Huang-Li-Yong-2015}. 

(ii) Theoretical Aspect: Comparing to Huang {\em et al.} \cite{Huang-Li-Yong-2015} and Yong \cite{Yong-2017}, the MFC-LQ problem in this paper involves conditional expectation on $[t,\infty)$, which complicates the analysis of theoretical results. We formulate a proposition for the stabilizability of the system, which plays a key role in this paper and relaxes the equivalent condition for the stabilizer in \cite{Huang-Li-Yong-2015}. The equivalence between the RL algorithm and Lyapunov Recursion is proved to obtain the stabilizability and convergence properties for the RL algorithm, which is more complicated than \cite{Li-Li-Peng-Xu-2022} because of the invalidity of the Bellman equation.

 (iii) Numerical Implementation Aspect: The methods in Wang {\it et al.} \cite{Wang-Zariphopoulou-Zhou-2019}, Wang and Zhou \cite{Wang-Zhou-2019}, and Gao {\it et al.} \cite{Gao-Xu-Zhou-2022} intend to obtain the optimal control distribution by {\em exploratory formulation} and only deal with one-dimensional numerical example. Differently, our RL algorithm obtains the exact optimal control by {\em policy iteration} and deals with the numerical example in high dimension by the Kronecker product. The implementation in \cite{Li-Li-Peng-Xu-2022} only calculated $P^{(i+1)}$ using Bellman equation on $[t, t+\Delta t]$, however, the calculation for $(P^{(i+1)},\hatP^{(i+1)})$ in this paper is based on two equations on $[t, \infty)$: a new equation involving integrals on both sides is introduced to solve $P^{(i+1)}$ and an equation involving value function and cost functional is used to solve $\hatP^{(i+1)}$.

The RL method for MFC-LQ problems introduced in this paper is valuable and useful for engineering applications. Let us return to the phenomena mentioned in the previous motivation. As an operator, one may not need to learn the system's internal structure. During the operation process, one can only observe the state of the autonomous vehicles, the temperature of the building, and the capacity of wind or solar power. For example, a controller may want to increase the capacity of a wind power generation system; meanwhile, he may hope to minimize the variance to resist risk from the uncertainty of the wind. With partial information of the system, he tries, based on his experience, to run the system. By observing the state trajectories, he evaluates the reward (policy evaluation) and then improves the control (policy improvement). By repeating this procedure, he can obtain the optimal control using the RL method.

The rest of this paper is organized as follows. Section \ref{Sec2} introduces an MFC-LQ problem and some preliminaries. In Section \ref{Sec3}, we present a policy iteration RL algorithm to compute the pre-commitment optimal feedback control and discuss the stabilizability and convergence of the algorithm. We give an algorithm implementation and illustrate it with a numerical example in Section \ref{Sec4}.

\emph{Notation:} Let $(\Omega, \cF, \bbP,\bbF)$ be a complete filtered probability space on
which a standard one-dimensional Brownian motion $W(\cdot)$ is
defined with $\bbF\equiv\{\cF_t\}_{t\geq 0}$ being its
natural filtration augmented by all $\bbP$-null sets. Let $\bbN$ denote the set of positive integers, and $l, m, n, k, L, M, N, H\in
\bbN$ be the given constants. Denote $ \bbR^n$ as the $n$-dimensional
Euclidean space with the norm $|\cdot|$. Hilbert space 
$L^2_{\bbF}([t,\infty); \bbR^n)$ is defined as the space of $ \bbR^n$-valued $\bbF$-progressively measurable processes $\varphi(\cdot)$ with the finite norm 
$$\|\varphi(\cdot)\|=\left[\E_t\int_{t}^\infty |\varphi(s)|^2\ds\right]^{\frac{1}{2}}<\infty.$$
Here, ${\E_t}={\E}[\cdot|\cF_t]$ stands for the conditional expectation operator. 
Let $ \bbR^{n\times m}$ be the set of all
$n\times m$ real matrices and $\cS^n$ be the collection of all symmetric matrices in $ \bbR^{n\times n}$. As usual, if a
matrix $A\in \cS^n$ is positive semidefinite (resp. positive
definite), we write
$A\geq 0$ (resp. $>0$). All the positive
semidefinite (resp. positive
definite) matrices are collected by
$\cS^n_+$ (resp. $\cS^n_{++}$). 
If $A$, $B\in \cS^n$, then we write $A\geq B$ (resp. $>$) if $A-B\geq 0$ (resp. $>0$). 
Furthermore, $O$ denotes zero matrices with appropriate dimension, $I$ denotes identity matrices with appropriate dimensions, and the superscript $\top$ denotes the transpose of a matrix (or a vector).

For any $0\leq t<T<\infty$, we introduce the following spaces:
\begin{itemize} 
\item $\cX[t,T]=\Big\{X : [t,\infty)\times \Omega \rightarrow \bbR^n~\Big|~X(\cdot)$ is $\bbF$-adapted, $t\rightarrow X(t,\omega)$ is continuous, and $\E_t\Big[\max_{s\in[t,T]}|X(s)|^2\Big] <\infty\Big\} $;
\item $\cX_{\loc}[t,\infty)=\bigcap_{T>t}\cX[t,T]$;
\item $\cX[t,\infty)=\Big\{X(\cdot)\in\cX_{\loc}[t,\infty)~\Big|~\E_t\Big[\int_t^\infty|X(s)|^2\ds\Big]<\infty\Big\}$.
\end{itemize}

\section{Problem Formulation and Preliminaries }\label{Sec2}
\subsection{Problem Formulation}
In this paper, we consider the following controlled
MF-SDE on $[t,\infty)$ for $t\geq 0$:
\begin{equation}\label{MF-system}
\left\{
\begin{aligned}
& \dd X(s)=\Big\{AX(s)+\barA{\E_t} [X(s)]+Bu(s)+\barB{\E_t} [u(s)] \Big\}\ds \\
& \quad+\Big\{ CX(s)+\barC{\E_t} [X(s)]+Du(s)+\barD{\E_t} [u(s)] \Big\} \dw(s),\\&\qquad \qquad \qquad \qquad \qquad \qquad   \qquad \qquad \qquad \qquad s\in [t,\infty),\\
& X(t)=x,
\end{aligned}
\right.
\end{equation}
where $A$, $\barA$, $C$, $\barC\in \bbR^{n\times n}$ and $B$, $\barB$, $D$, $\barD\in \bbR^{n\times m}$ are constant matrices of proper sizes. 
The process $X(\cdot)$ valued in $ \bbR^n$ is called a {\it state process} corresponding to the control $u(\cdot)$, where $u(\cdot) \in L^2_{\bbF}([t,\infty); \bbR^m)$ is the {\it control process}; and $(t,x)\in\cD$ is called the initial pair, where 
\[
\cD=\big\{(t,x)~\big|~t\in[0,\infty), x {\rm~is~} \cF_t{-\rm measurable}, \E(|x|^2)<\infty\big\}.
\] 
For any initial pair $(t,x)\in\cD$ and   control $u(\cdot)\in L^2_{\bbF}([t,\infty); \bbR^m)$, MF-SDE \eqref{MF-system} admits a unique
solution $X(\cdot) \equiv X(\cdot;t,x,u(\cdot)) \in \cX_{\loc}[t,\infty)$ (see, Huang {\it et al.} \cite{Huang-Li-Yong-2015}). Our cost functional is defined as follows:
\begin{equation}\label{Sec2_Cost}
\begin{aligned}
&~~~J\big(t,x;u(\cdot)\big)\\=\ & {\E_t} \bigg\{ \int_t^\infty \Big[
\big\langle QX(s),\ X(s) \big\rangle+\big\langle \barQ{\E_t} [X(s)],\ {\E_t} [X(s)] \big\rangle\\
&\qquad \quad+2\big\langle SX(s),\ u(s) \big\rangle+2\big\langle \barS{\E_t} [X(s)],\ {\E_t} [u(s)] \big\rangle\\
&\qquad \quad+\big\langle Ru(s),\ u(s) \big\rangle+\big\langle \barR{\E_t} [u(s)],\ {\E_t} [u(s)] \big\rangle \Big]\ds\bigg\},
\end{aligned}
\end{equation}
where $Q$, $\barQ$, $S$, $\barS$, $R$ and $\barR$ are given constant matrices of proper sizes. Note that in general, given an initial $(t,x)\in\cD$ and a control $u(\cdot)\in L^2_{\bbF}([t,\infty); \bbR^m)$, the solution $X(\cdot)$ of \eqref{MF-system} might just be in $\cX_{\loc}[t,\infty)$, which is not enough to ensure the cost functional $J\big(t,x;u(\cdot)\big)$  well-defined. 
To avoid such situations, we introduce the following set
\begin{align*}
	&\cU_{ad}[t,\infty)
=\Big\{u(\cdot)\in L^2_{\bbF}([t,\infty); \bbR^m)~\big|\\\nonumber & ~~~~~~~~~~~~X(\cdot)\equiv X(\cdot;t,x,u(\cdot))\in\cX[t,\infty)  {\rm~for ~any~}(t,x)\in\cD \Big\}.\nonumber
\end{align*}
The elements of $\cU_{ad}[t,\infty)$ are called {\it admissible control processes} and the corresponding states are called {\it admissible state processes}. 

We introduce a family of MFC-LQ stochastic optimal control problems.
 \smallskip\\
\noindent{\bf Problem (MFC-LQ).} For any $(t,x)\in\cD$, find an admissible
control $u^*(\cdot) \in \cU_{ad}[t,\infty)$ such that
\begin{equation*}
J\big(t,x;u^*(\cdot)\big)=V(t, x)\triangleq\inf_{u(\cdot)\in \cU_{ad}[t,\infty)} J\big(t,x;u(\cdot) \big).
\end{equation*}
Here, $V(t,x)$ is called the {\it pre-commitment} value function. 
For any given $(t,x)\in\cD $, 
Problem (MFC-LQ) is called {\it well-posed} at $(t,x)$ if $ V(t, x)$ is finite. 
It is called {\it solvable} at $(t,x)$ if Problem (MFC-LQ) is well-posed and $ V(t, x)$ is achieved by an
admissible control $u^*(\cdot)\in\cU_{ad}[t,\infty)$. 
Correspondingly, $u^*(\cdot)$ is called a {\it pre-commitment optimal control}, 
$X^*(\cdot) \equiv X\big(\cdot;t, x, u^*(\cdot)\big)$ called the
{\it pre-commitment optimal trajectory}, and
$(X^*(\cdot), u^*(\cdot))$ called a {\it pre-commitment optimal pair}.

For simplicity, we adopt the notations $\widehat \Pi=\Pi+\overline\Pi$ with $\widehat \Pi=\hatA, \hatB, \hatC, \hatD, \hatQ, \hatS, \hatR$, $\Pi=A, B, C, D, Q, S, R$, and $\overline\Pi=\barA, \barB, \barC, \barD, \barQ, \barS, \barR$. Also, we denote $\bf\Gamma=\left(\begin{array}{ccc} \Gamma & O\\ O & \widehat
\Gamma
\end{array} \right)$ with $\bf\Gamma=\bfQ, \bfS, \bfR$, $\Gamma=Q, S, R$, and $\widehat \Gamma=\hatQ, \hatS, \hatR$ in this paper.

Similar to the condition {\bf (J)$'$} in Huang {\em et al.} \cite{Huang-Li-Yong-2015}, we put the following positive definite condition (PDC, for short) on the triple $(\bfQ, \bfS, \bfR)$. 
\begin{equation}\label{CPD}
 \mbox{\bf Condition (PDC).}\quad\mbox{$\bfR > 0$ and $\bfQ-\bfS^{\top} \bfR^{-1}\bfS>0$. \hspace{181pt}} 
\end{equation}
It is obvious that, if $(\bfQ, \bfS, \bfR)$ satisfies the PDC \eqref{CPD}, then we have
$J(t,x;u(\cdot)) > 0$ for any $(t,x)\in \cD$ and
any $u(\cdot)\in\cU_{ad}[t,\infty)$ except for the equilibrium $x=0$ so that Problem (MFC-LQ) is
well-posed. As stated in Huang {\em et al.} \cite{Huang-Li-Yong-2015}, if the PDC \eqref{CPD} holds, $u(\cdot) \in \mathcal{U}_{a d}[t, \infty)$ if and only if for any $(t,x) \in \mathcal D$, $X(\cdot ; x, u(\cdot)) \in \mathcal{X}[t, \infty)$.

\subsection{MF-$L^2$-Stabilizable}

\begin{definition}\label{stabilizition}
System \eqref{MF-system} is said to be MF-$L^2$-stabilizable if there exists a pair $(K,\hatK)\in(\bbR^{m\times n})^2$ such that for any $(t,x)\in\cD $ the solution of 
{\small\begin{equation}\label{MF-systemK}
\left\{
\begin{aligned}
& \dd X(s)=\Big\{ \big(A+BK\big)\big(X(s)-{\E_t}[X(s)]\big)+\big(\hatA+\hatB\hatK\big){\E_t}[X(s)]\Big\}\ds \\
& ~~+\Big\{ \big(C+DK\big)\big(X(s)-{\E_t}[X(s)]\big)+\big(\hatC+\hatD\hatK\big){\E_t}[X(s)]\Big\} \dw(s),\\&~\qquad\qquad\qquad\qquad\qquad\qquad\qquad\qquad\qquad\qquad\qquad\quad s\in [t,\infty),\\
& X(t)=x
\end{aligned}
\right.
\end{equation}}
satisfies $\lim\limits_{s\rightarrow\infty}{\E} |X(s)|^2=0$ and $X(s)\in\cX[t,\infty)$. In this case, $(K,\hatK)$ is called an MF-$L^2$-stabilizer of system \eqref{MF-system}. 
\end{definition}

\begin{assumption}\label{ass1}
System \eqref{MF-system} is MF-$L^2$-stabilizable.
\end{assumption}

Next, we present an equivalent condition for the existence of the stabilizers for the system \eqref{MF-system}, which in particular shows that the stabilizability  of \eqref{MF-system} is independent of the initial point $(t,x)\in\cD$. 
\begin{proposition}
\label{lemma-stabilizer} 
A pair $(K,\hatK)\in(\bbR^{m\times n})^2$ is an MF-$L^2$-stabilizer of the system \eqref{MF-system} if and only if 
there exists a pair of matrices $(P,\hatP)\in(\cS_{++}^n)^2$ such that 
\begin{equation}\label{stabilizable-condition}
\left\{
\begin{aligned}
&(A+BK)^{\top} P+P(A+BK)
+(C+DK)^{\top} P(C+DK)<0,\\
&(\hatA+\hatB\hatK)^{\top}\hatP+\hatP(\hatA+\hatB\hatK)<0.
\end{aligned}
\right.
\end{equation}
In this case, for any $\Lambda,\widehat \Lambda\in\cS^n$ (resp., $\cS_{+}^n$, $\cS_{++}^n$), the system of Lyapunov equations
{\small\begin{equation}\label{stabilizition-2}
\left\{
\begin{aligned}
&(A+BK)^{\top} P+P(A+BK)
+(C+DK)^{\top} P(C+DK)+\Lambda=0,\\
&(\hatA+\hatB\hatK)^{\top}\hatP+\hatP(\hatA+\hatB\hatK)+\widehat \Lambda=0
\end{aligned}
\right.
\end{equation}}
admits a unique solution $(P,\hatP)\in(\cS^n)^2$ (resp., $(\cS_{+}^n)^2$, $(\cS_{++}^n)^2$). 
\end{proposition}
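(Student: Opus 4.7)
The plan is to decouple the closed-loop system \eqref{MF-systemK} into the conditional mean $\bar X(s)=\E_t[X(s)]$ and the fluctuation $\tilde X(s)=X(s)-\bar X(s)$, and then invoke classical Lyapunov theory on each component. Taking $\E_t[\cdot]$ in \eqref{MF-systemK} gives the deterministic ODE $\dot{\bar X}(s)=(\hatA+\hatB\hatK)\bar X(s)$ with $\bar X(t)=\E_t[x]$, while the fluctuation satisfies
\[
\dd\tilde X(s)=(A+BK)\tilde X(s)\ds+\big[(C+DK)\tilde X(s)+(\hatC+\hatD\hatK)\bar X(s)\big]\dw(s),
\]
with $\tilde X(t)=x-\E_t[x]$ and the crucial property $\E_t[\tilde X(s)]\equiv 0$. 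Since $\E|X(s)|^2=\E|\tilde X(s)|^2+\E|\bar X(s)|^2$, MF-$L^2$-stabilizability decouples into exponential decay of $\bar X$ together with mean-square decay of $\tilde X$.

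For necessity, I first take $x\equiv \xi\in\bbR^n$ deterministic: then $\tilde X\equiv 0$ and $\bar X$ traces out every orbit of $\dot{\bar X}=(\hatA+\hatB\hatK)\bar X$, forcing $\hatA+\hatB\hatK$ to be Hurwitz; the deterministic Lyapunov theorem then yields a unique $\hatP\in\cS_{++}^n$ solving the second line of \eqref{stabilizition-2} for every $\hat\Lambda$ in $\cS^n$ (resp.\ $\cS_+^n$, $\cS_{++}^n$), and with it the second strict inequality of \eqref{stabilizable-condition}. Next, taking $x$ with $\E_t[x]=0$ annihilates the forcing, so the $\tilde X$-equation becomes a homogeneous state-multiplicative SDE whose mean-square stability delivers, by the stochastic Lyapunov theorem used in Huang \emph{et al.} \cite{Huang-Li-Yong-2015}, a unique $P$ in the corresponding cone satisfying the first line of \eqref{stabilizition-2} for any $\Lambda$; the explicit representation $y^\top Py=\E\int_0^\infty \tilde Y(s)^\top\Lambda\tilde Y(s)\ds$, with $\tilde Y$ the closed-loop fundamental solution started at $y$, makes the cone-preservation and uniqueness transparent and gives the first strict inequality of \eqref{stabilizable-condition}.

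For sufficiency, I use the Lyapunov candidate $V(s)=\E_t[\tilde X(s)^\top P\tilde X(s)]+\bar X(s)^\top\hatP\bar X(s)$. Applying Itô to $\tilde X^\top P\tilde X$ and taking $\E_t$, the quadratic form from the first inequality of \eqref{stabilizable-condition} contributes $-\E_t[\tilde X^\top\Lambda\tilde X]$ with some $\Lambda>0$; the would-be cross term $2\E_t[\tilde X^\top(C+DK)^\top P(\hatC+\hatD\hatK)\bar X]$ vanishes because $\E_t[\tilde X(s)]=0$ while $\bar X$ is $\cF_t$-measurable; and a pure-$\bar X$ variance term $\bar X^\top(\hatC+\hatD\hatK)^\top P(\hatC+\hatD\hatK)\bar X$ remains. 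Differentiating $\bar X^\top\hatP\bar X$ contributes $-\bar X^\top\hat\Lambda\bar X$ with $\hat\Lambda>0$. Combining the two estimates yields $\dot V(s)\leq -\alpha V(s)+\beta|\bar X(s)|^2$ with $\alpha>0$, and since $|\bar X(s)|^2$ decays exponentially on its own, Gronwall gives $V(s)\to 0$; unconditioning via the tower property yields $\E|X(s)|^2\to 0$.

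The main technical point is upgrading the pointwise decay of $V(s)$ to the integrability $\E_t\int_t^\infty|X(s)|^2\ds<\infty$ required by the definition of $\cX[t,\infty)$; this follows by a standard second-moment Gronwall estimate on each interval $[t,T]$ combined with the exponential rate obtained above, and then sending $T\to\infty$. A secondary subtlety is the uniformity of $(P,\hatP)$ in $(t,x)\in\cD$: time-homogeneity of the coefficients plus the tower property reduce the verification to a single value of $t$, ensuring that the constructed $(P,\hatP)$ depends only on $(K,\hatK)$ and the system data, not on the initial pair.
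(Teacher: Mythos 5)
Your proof follows essentially the same route as the paper's: decompose the closed-loop state into the conditional mean $\bar X=\E_t[X]$ and the fluctuation $\tilde X=X-\bar X$, prove sufficiency by an It\^o--Gronwall estimate on $\E_t[\tilde X^\top P\tilde X]$ (where the cross term vanishes because $\E_t[\tilde X]=0$) driven by the exponentially decaying forcing $|\bar X|^2$, and obtain necessity and the solvability of \eqref{stabilizition-2} by reducing to the classical deterministic and stochastic Lyapunov theorems. One intermediate claim is false, however: for deterministic $x=\xi$ the fluctuation $\tilde X$ is \emph{not} identically zero, since its diffusion coefficient contains the forcing term $(\hatC+\hatD\hatK)\bar X(s)\dw(s)$, which keeps $X$ genuinely random even from a deterministic start. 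This does not damage your argument, because the Hurwitz property of $\hatA+\hatB\hatK$ already follows from $|\bar X(s)|^2=|\E_t[X(s)]|^2\le\E_t|X(s)|^2\to 0$ via the orthogonal decomposition, without $\tilde X$ having to vanish; with that claim deleted the proof is correct and matches the paper's, which handles the final integrability $X(\cdot)\in\cX[t,\infty)$ by citing Proposition 2.2 of \cite{Li-Shi-Yong-2021} rather than by the Gronwall estimate you sketch.
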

\begin{proof}
``$\Leftarrow$": We first prove that $(K,\hatK)\in(\bbR^{m\times n})^2$ is an MF-$L^2$-stabilizer of the system \eqref{MF-system} if \eqref{stabilizable-condition} holds. 

Taking conditional expectation on \eqref{MF-systemK}, we have
\begin{equation}\label{MF-Ex}
\left\{
\begin{aligned}
& d {\E_t} [X]=(\hatA+\hatB\hatK){\E_t} [X]\ds,\quad s\in [t,\infty),\\
& {\E_t}[X(t)]=x.
\end{aligned}
\right.
\end{equation}
For any $P\in\cS^n$, applying It\^o's formula to $\langle P(X-{\E_t}[X]), X-{\E_t}[X]\rangle$ with $X-{\E_t}[X]$ calculated by \eqref{MF-systemK} and \eqref{MF-Ex}, then integrating on $[t,\tau]$ for any constant $\tau>t$ and taking conditional expectation ${\E_t}[\cdot]$ on both sides yields 
\begin{equation}\label{var}
\begin{aligned}
&~~~{\E_t}\big\langle P(X(\tau)-{\E_t}[X(\tau)]), X(\tau)-{\E_t}[X(\tau)] \big\rangle\\&={\E_t}\int_{t}^\tau\Big\{\Big\langle\Big((A+BK)^{\top} P+P(A+BK)\\&
{\small+(C+DK)^{\top} P(C+DK)\Big)\big(X(s)-{\E_t}[X(s)]\big), X(s)-{\E_t}[X(s)]\Big\rangle} \\&~~~+\Big\langle
(\hatC+\hatD\hatK)^{\top} P(\hatC+\hatD\hatK){\E_t}[X(s)], {\E_t}[X(s)]\Big\rangle\Big\}\ds.
\end{aligned}
\end{equation} 
If there exists a $P\in\cS_{++}^n$ such that 
the first inequality of \eqref{stabilizable-condition} holds,
then all the eigenvalues of 
\[
(A+BK)^{\top} P+P(A+BK)
+(C+DK)^{\top} P(C+DK)
\]
are negative. 
Denote 
$\varphi_t(\tau)=\var_t[X(\tau)]$ and $\psi_t(\tau)=|{\E_t}[X(\tau)]|^2$, 
then \eqref{var} implies that 
$
\begin{aligned}
\dot \varphi_t(\tau)\leq -\mu\varphi_t(\tau)+L\psi_t(\tau),
\end{aligned}
$
for some $\mu>0$ and $L>0$. 
By the Gronwall inequality in differential form and using $\varphi_t(t)=0$, we have $\varphi_t(\tau)\leq L\int_t^\tau e^{-\mu(\tau-s)}\psi_t(s)\ds$. 
Solving \eqref{MF-Ex}, we obtain $
{\E_t} [X(\tau)]=e^{(\hatA+\hatB\hatK)(\tau-t)}x. 
$

Because the second inequality of \eqref{stabilizable-condition} is the sufficient and necessary condition of stabilizable property for the system \eqref{MF-Ex}, there exist $M>0$ and $m>0$ with $\mu\neq m=-2\max\sigma(\hatA+\hatB\hatK)$ such that $\psi_t(\tau)\leq M|x|^2e^{-m (\tau-t)},~\tau\geq t$. Then 
\[
\begin{aligned}
\varphi_t(\tau)&\leq LM|x|^2\frac{e^{-(\mu+m) \tau }-e^{-\mu(\tau-t) }}{\mu-m}, \end{aligned}
\]
which implies that
\[
\begin{aligned}
&~~~{\E}|X(\tau)|^2={\E}\big(\varphi_t(\tau)+\psi_t(\tau)\big)\\
&\leq M{\E}|x|^2\Big[L\frac{e^{-(\mu+m) \tau }-e^{-\mu(\tau-t)}}{\mu-m}+e^{-m (\tau-t)}\Big]\rightarrow 0,~\tau \rightarrow \infty.
\end{aligned}
\]
Because the inequalities of \eqref{stabilizable-condition} hold, from Proposition 2.2 in \cite{Li-Shi-Yong-2021}, $X(s)\in\cX[t,\infty)$, thus $(K,\hatK)$ is an MF-$L^2$-stabilizer for the system \eqref{MF-system}. 

``$\Rightarrow$" 
Since $\hatP\in\cS_{++}^n$, then $(\hatC+\hatD\hatK)^{\top}\hatP(\hatC+\hatD\hatK)\geq0$. From Proposition A.5 in \cite{Huang-Li-Yong-2015}, it is obvious that \eqref{stabilizable-condition} holds. By Theorem 1 in \cite{Rami-Zhou-2000}, the Lyapunov equations \eqref{stabilizition-2} admit a unique solution $(P,\hatP)\in(\cS^n)^2$ (resp., $(\cS_{+}^n)^2$, $(\cS_{++}^n)^2$) for any $\Lambda,\widehat \Lambda\in\cS^n$ (resp., $\cS_{+}^n$, $\cS_{++}^n$). 
\end{proof}
 
Theoretically, Proposition \ref{lemma-stabilizer} gives an equivalent condition to check the stabilizer. In practice, when only partial coefficients of the system (1) are known, one can try some $(K,\widehat K)$ to run the system and observe the state trajectory $X(s)$. If the chosen $(K,\widehat K)$ can make  $X(s)$ tend to a neighborhood of zero as time $s$ goes to infinity, then $(K,\widehat K)$ can be chosen as a stabilizer and Assumption 2.1 is satisfied. 

\section{A Policy Iteration Reinforcement Learning Algorithm for MFC-LQ Problem}\label{Sec3}

In this section, we present a policy iteration RL algorithm to solve the pre-commitment optimal control of Problem (MFC-LQ). We begin by resolving the solvability issue of the corresponding GAREs
\begin{equation}\label{Riccati_Eq1}
\left\{
\begin{aligned}
& A^{\top} P+PA+C^{\top} PC+Q -\big(PB+C^{\top}
PD+S^{\top}\big)\\
&~~~~~~~~~\times \big(D^{\top} PD+R\big)^{-1}
\big(B^{\top} P+D^{\top}
PC+S\big)=0,\\
& D^{\top} PD+R > 0,\end{aligned}
\right.
\end{equation}
\begin{equation}\label{Riccati_Eq2}
\left\{
\begin{aligned}
&\hatA^{\top}\hatP+\hatP\hatA+\hatC^{\top} P
\hatC+\hatQ -\big(\hatP\hatB 
+\hatC^{\top} P\hatD+\hatS^{\top}\big)\\
&~~~~~~~~~\times\big(\hatD^{\top} P
\hatD+\hatR \big)^{-1} \big(\hatB^{\top}\hatP 
+\hatD^{\top} P\hatC+\hatS\big)
=0,\\
&\hatD^{\top} P\hatD+\hatR > 0.\end{aligned}
\right.
\end{equation}
The classical Riccati equations, which emerge from deterministic LQ problems (see \cite{Bittanti-Laub-Willems-1991}), have a quadratic form. In contrast, when dealing with stochastic LQ problems, one encounters {\em generalized} or {\em formal} Riccati equations (see \cite{Bismut-1976}). Although these do not have a quadratic form, they are still referred to as Riccati equations in the literature (see \cite{Rami-Zhou-2000} and \cite{Huang-Li-Yong-2015}). In the cases of $D=O$ or $m=1$, \eqref{Riccati_Eq1} is  simplified to the classical quadratic form. As noted in \cite{Rami-Zhou-2000}, the  GARE \eqref{Riccati_Eq1} is fundamentally different from their deterministic counterparts and is more complex, as the inverse component involves the unknown $P$. Unlike \eqref{Riccati_Eq1}, \eqref{Riccati_Eq2} retains the classical quadratic form because the inverse component involves only the known  $P$, not the unknown $\widehat P$.

Now, we give the unique solvability of the GAREs \eqref{Riccati_Eq1}-\eqref{Riccati_Eq2} and construct a pre-commitment optimal control of feedback form as follows.

\begin{theorem}\label{th-1} \sl
Under Assumption \ref{ass1} and the PDC \eqref{CPD}, the
 system of the GAREs \eqref{Riccati_Eq1}-\eqref{Riccati_Eq2} admits a unique pair of solution $(P,\hatP)\in(\cS^n_{++})^2$. Moreover, for any given
$(t,x)\in\cD$, the following feedback form control 
\begin{equation}\label{Optimal_Control}
u^*(s)=KX^*(s)+(\hatK-K)\mathbb E_t[X^*(s)],\quad s\in [t,\infty)
\end{equation}
is the unique optimal control of Problem (MFC-LQ), 
where $(K,\hatK)$ is a stabilizer given by 
\begin{equation}\label{KK}
\left\{\begin{aligned}
& K=- \big(D^{\top} PD+R\big)^{-1}
\big(B^{\top} P+D^{\top}
PC+S\big),\\
&\hatK=- \big(\hatD^{\top} P
\hatD+\hatR \big)^{-1} \big(\hatB^{\top}\hatP 
+\hatD^{\top} P\hatC+\hatS\big) 
\end{aligned}\right.
\end{equation}
and the optimal trajectory $X^*$ is determined by
\begin{equation*}\label{Sec2.F_Optimal_State}
\left\{
\begin{aligned}
& \dd X^*=\Big\{ \big(A+BK \big) \big(X^*-{\E_t} [X^*]\big)+\big(\hatA+\hatB\hatK \big){\E_t}[X^*]\Big\}\ds\\
& +\Big\{ \big(C+DK \big) \big(X^* -{\E_t}[X^*] \big)+\big(\hatC+\hatD\hatK\big){\E_t}[X^*] \Big\}\dw(s), \\
&X^*(t)=x, \quad \qquad \qquad \qquad \qquad \qquad \qquad \qquad \qquad s\in [t,\infty).
\end{aligned}
\right.
\end{equation*}
Moreover, 
\begin{equation}\label{V(x)}
	V(x)=\big\langle\hatP x,x\big\rangle
\end{equation}
is the  value function and  $V(x)>0$ except for $x=0$.
\smallskip
\end{theorem}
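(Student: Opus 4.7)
My plan is to exploit the one-way coupling of the GAREs \eqref{Riccati_Eq1}--\eqref{Riccati_Eq2}: the first involves only $P$, while the second involves $\hatP$ together with the already-determined $P$. After solving them in this order, I would verify optimality of the candidate feedback via a completion-of-squares argument that decouples the variance process $\tilde X = X - \E_t[X]$ from the conditional mean $\bar X = \E_t[X]$, with an analogous split $u = \tilde u + \bar u$.

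For solvability, Assumption \ref{ass1} furnishes a pair $(K,\hatK)$ satisfying \eqref{stabilizable-condition}; in particular the first inequality gives an $L^2$-stabilizer for the tilde-dynamics, so under PDC \eqref{CPD} the standard theory of stochastic algebraic Riccati equations (\cite{Rami-Zhou-2000}, as recalled after the statement) yields a unique $P \in \cS^n_{++}$ solving \eqref{Riccati_Eq1}. Substituting this $P$ into \eqref{Riccati_Eq2} turns it into a classical-form GARE in $\hatP$ with effective state-weight $\hatC^{\top} P \hatC + \hatQ$; combined with the second inequality of \eqref{stabilizable-condition} and PDC, the same theory delivers a unique $\hatP \in \cS^n_{++}$.

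For optimality and identification of the value function, the key observation is that $\bar X$ and $\bar u$ are $\cF_t$-measurable (so $\E_t$ treats them as constants), while $\E_t[\tilde X] = \E_t[\tilde u] = 0$. Consequently, every cross term between the tilde- and bar-processes vanishes under $\E_t$, and the integrand of $J$ splits orthogonally into a tilde-piece with weights $(Q, S, R)$ and a bar-piece with weights $(\hatQ, \hatS, \hatR)$; the same decoupling applies to the quadratic form produced by It\^o's formula on $\langle P \tilde X, \tilde X\rangle$. I would then invoke \eqref{Riccati_Eq1} to complete the square in $\tilde u - K \tilde X$, and apply the deterministic chain rule to $\langle \hatP \bar X, \bar X\rangle$ along the ODE $d\bar X = (\hatA \bar X + \hatB \bar u)\ds$, using \eqref{Riccati_Eq2} to complete the square in $\bar u - \hatK \bar X$. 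Collecting initial terms via $\tilde X(t)=0$ and $\bar X(t)=x$, and letting the upper limit tend to infinity, produces
\begin{equation*}
\begin{aligned}
J(t,x;u(\cdot)) &= \langle \hatP x, x\rangle + \E_t\!\int_t^\infty\!\! |M^{1/2}(\tilde u - K \tilde X)|^2 \ds \\
&\quad + \int_t^\infty\!\! |\widehat M^{1/2}(\bar u - \hatK \bar X)|^2 \ds,
\end{aligned}
\end{equation*}
with $M = D^{\top} P D + R > 0$ and $\widehat M = \hatD^{\top} P \hatD + \hatR > 0$ by PDC. This identifies $V(x) = \langle \hatP x, x\rangle$ (strictly positive since $\hatP \in \cS^n_{++}$) and forces the unique minimizer to satisfy $\tilde u = K \tilde X$ and $\bar u = \hatK \bar X$, which recombine into the feedback \eqref{Optimal_Control}.

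The main obstacle will be handling the boundary terms in the It\^o/chain-rule expansion as $\tau \to \infty$ for an \emph{arbitrary} admissible $u$, not merely for the optimal one: one must show $\E_t\langle P \tilde X(\tau), \tilde X(\tau)\rangle \to 0$ and $\langle \hatP \bar X(\tau), \bar X(\tau)\rangle \to 0$. For the optimal feedback these follow directly from Proposition \ref{lemma-stabilizer} applied to the $(K,\hatK)$ in \eqref{KK} (which one must check actually satisfies \eqref{stabilizable-condition}, via the identity $(A+BK)^{\top}P + P(A+BK) + (C+DK)^{\top}P(C+DK) = -(Q + K^{\top}S + S^{\top}K + K^{\top}RK)$ obtained from \eqref{Riccati_Eq1}, whose right-hand side is negative definite by the Schur complement form of PDC). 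For a general admissible $u$ one instead combines the $\cX[t,\infty)$-regularity built into $\cU_{ad}[t,\infty)$ with a coercivity bound coming from PDC, so that the cost itself controls $\int |\tilde X|^2, |\bar X|^2\,\ds$, in the spirit of \cite{Huang-Li-Yong-2015} but adapted to the conditional-expectation setting of the present paper.
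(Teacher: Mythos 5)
Your proposal is correct and follows essentially the same route as the paper: decompose into $X-\E_t[X]$ and $\E_t[X]$, apply It\^o's formula to $\langle P(X-\E_t[X]),X-\E_t[X]\rangle$ and the chain rule to $\langle \hatP\,\E_t[X],\E_t[X]\rangle$, complete squares with weights $D^{\top}PD+R$ and $\hatD^{\top}P\hatD+\hatR$ to get $J=\langle\hatP x,x\rangle+(\text{nonnegative terms})$, and verify that $(K,\hatK)$ stabilizes via the Lyapunov rewriting of the GAREs together with Proposition \ref{lemma-stabilizer}. The only organizational difference is that you build the solution of the GAREs sequentially from the one-way coupling and \cite{Rami-Zhou-2000}, whereas the paper imports existence from Theorem 5.2 of \cite{Huang-Li-Yong-2015} and then establishes positive definiteness through the integral representation of $P$; you are also somewhat more explicit about the vanishing of the boundary terms for arbitrary admissible controls, which the paper leaves implicit.
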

\begin{proof}
We firstly prove that $(X^*,u^*)$ is the unique pre-commitment optimal pair of Problem (MFC-LQ). By Theorem 5.2 in \cite{Huang-Li-Yong-2015}, 
let $
(P,\widehat P)$ be the solution of the GAREs \eqref{Riccati_Eq1}-\eqref{Riccati_Eq2}. Based on \eqref{MF-systemK} and \eqref{MF-Ex}, applying It\^o's formula to $\langle P(X-{\E_t} [X]),~X-{\E_t} [X]\rangle$ and differentiating $\langle\hatP{\E_t} [X],~{\E_t} [X]\rangle$, by completing squares, we have 
{\small\begin{equation*}\label{J1squared}
\begin{aligned}
&~~~J(t,x;u(\cdot))\\&={\E_t}\int_t^\infty\Big\{\big\langle (D^{\top} PD+R)\\&\qquad\times\big[u-{\E_t} [u]-K(X-{\E_t} [X])\big],~\big[u-{\E_t} [u]-K(X-{\E_t} [X])\big]\big\rangle\\
&~~~\qquad\quad+\big\langle\big[ A^{\top} P+PA+C^{\top} PC+Q-K^{\top}(D^{\top} PD+R)K\big]\\&\qquad\qquad\times(X-{\E_t} [X]),~X-{\E_t} [X]\big\rangle\Big\}\ds\\
&~~~+\int_t^\infty\Big\{\big\langle (\hatD^{\top} P\hatD+\hatR)({\E_t} [u]-\hatK{\E_t} [X]),~{\E_t} [u]-\hatK{\E_t} [X]\big\rangle\\
&~~~\qquad+\big\langle\big[\hatA^{\top}\hatP+\hatP\hatA+\hatC^{\top} P
\hatC+\hatQ\\
&~~~\qquad -\hatK^{\top} \big(\hatD^{\top} P\hatD+\hatR \big)\hatK\big]{\E_t} [X],~{\E_t} [X]\big\rangle\Big\}\ds+\big\langle\hatP x,~x\big\rangle.
\end{aligned}
\end{equation*}}
Recalling that $(P,\hatP)$ is the solution of  \eqref{Riccati_Eq1}-\eqref{Riccati_Eq2}, $D^{\top} PD+R>0$ and $\hatD^{\top} P\hatD+\hatR>0$, we get $J(t,x;u(\cdot))\geq\big\langle\hatP x,~x\big\rangle$.
If we take $u^*(\cdot)$ in the form of \eqref{Optimal_Control}, then \eqref{V(x)} holds true and \eqref{Optimal_Control} is the optimal feedback control. Under the PDC \eqref{CPD}, similar discussion to Theorem 5.2 in \cite{Huang-Li-Yong-2015}, the optimal control $u^*(\cdot)$ and optimal state $X^*(\cdot)$ are unique. From \eqref{Optimal_Control}, we have ${\E_t} [u^*(s)]=\hatK{\E_t} [X^*(s)]=\hatK e^{(\hatA+\hatB\hatK)(s-t)}x$ is also unique. Since $e^{(\hatA+\hatB\hatK)(s-t)}$ is invertible and $x$ is arbitrary, then $\hatK$ is unique. Similarly, $K$ is also uniquely determined. 

By \eqref{KK}, GARE \eqref{Riccati_Eq1} can be rewritten as
\begin{equation}\label{Lyapunov K}
\begin{aligned}
&(A+BK)^{\top} P+P(A+BK)+(C+DK)^{\top} P(C+DK)\\
&\qquad\qquad\qquad+K^{\top} RK+S^{\top} K+K^{\top} S+Q=0. 
\end{aligned}
\end{equation} 
Similar to Lemma 2.1 in \cite{Sun-Yong-2018}, $P$ can be uniquely solved as
\begin{equation}\label{P}
	P=\mathbb E_t\int_t^\infty\Phi(s)^\top(K^{\top} RK+S^{\top} K+K^{\top} S+Q)\Phi(s)\ds,
	\end{equation}
where $\Phi(\cdot)$ is the solution to the following SDE for $\mathbb R^{n\times n}$-valued processes:
\begin{equation*}
\left\{
\begin{aligned}
		\dd\Phi(s)&=(A+BK)\Phi(s)\ds+(C+DK)\Phi(s)\dw(s),~s\in[t,\infty),\\
		\Phi(t)&=I.
	\end{aligned}
\right.
\end{equation*} 
Because the triple $(\bfQ, \bfS, \bfR)$ satisfies the PDC \eqref{CPD}, we have 
\begin{equation}\label{Q}
\begin{aligned}
&~~~K^{\top} RK+S^{\top} K+K^{\top} S+Q>0.
\end{aligned}
\end{equation} 
Since $\Phi(\cdot)$ is invertible (see \cite{Yong-Zhou-1999}), by \eqref{P}, we have $P\in\mathcal S^n_{++}$. Combining \eqref{Lyapunov K} and \eqref{Q}, we obtain 
\begin{equation}\label{Lyapunov KK}
	(A+BK)^{\top} P+P(A+BK)
+(C+DK)^{\top} P(C+DK)<0.
\end{equation} 
By Proposition \ref{lemma-stabilizer}, we see that $K$ is the component of stabilizer.

Similarly, we also have $\hatP\in\mathcal S^n_{++}$ and $(K,\hatK)$ is a stabilizer of system \eqref{MF-system}. Since $\widehat P$ is positive definite, we see from the equation \eqref{V(x)} that $V(x)>0$ except for $x=0$. The proof is completed.

\end{proof}

From Theorem \ref{th-1}, the pre-commitment optimal control law is $u^*(X,\bar X)=KX+(\hatK-K)\bar X$, which is independent of time. So, the time-inconsistent property for this MFC-LQ problem is due to the property of state $X(\cdot)$. If the environment changes as time goes by, the controller may restart his program with a new initial state. Although the previous optimal control is not applicable anymore, the controller can still use the past optimal control gain $(K,\hatK)$ at  the new stage. It allows us to take a policy iteration RL algorithm to solve the problem at different stages. 

\begin{algorithm}\caption{\small {\it Policy Iteration for Problem (MFC-LQ)}}
1: {\bf Initialization:} Select any stabilizer $(K^{(0)},\hatK^{(0)})$ for the system \eqref{MF-system}.\\
2: Let $i=0$ and $\varepsilon>0$.\\
3: {\bf do} \{ \\
4: Obtain the state trajectory $X^{(i)}$ by running the system \eqref{MF-systemK} with $(K^{(i)},\hatK^{(i)})$ on $[t,\infty)$.\\
5: {\bf Policy Evaluation} (Reinforcement): Solve $(P^{(i+1)},\hatP^{(i+1)})$ from the identities 
{\small\begin{equation}
\begin{aligned}
&~~~\int_t^{\infty}\Big\langle P^{(i+1)}(\hatC+\hatD\hatK^{(i)}){\E_t}[X^{(i)}(s)],(\hatC+\hatD\hatK^{(i)}){\E_t}[X^{(i)}(s)]\Big\rangle\ds\\&={\E_t}\int_t^{\infty}\Big\langle(Q+2S^{\top} K^{(i)}+K^{(i)\top} RK^{(i)})
\\& \qquad \qquad \times(X^{(i)}(s)-{\E_t}[X^{(i)}(s)]),X^{(i)}(s)-{\E_t}[X^{(i)}(s)]\Big\rangle\ds,
\label{algorithm 1-evaluation-1}
\end{aligned}
\end{equation}}
{\small\begin{equation}
\begin{aligned} 
&~~~\big\langle\hatP^{(i+1)}x,x\big\rangle \\&={\E_t}\int_t^{\infty}\Big\langle(Q+2S^{\top} K^{(i)}+K^{(i)\top} RK^{(i)})
\\& \qquad \qquad \times(X^{(i)}(s)-{\E_t}[X^{(i)}(s)]),X^{(i)}(s)-{\E_t}[X^{(i)}(s)]\Big\rangle\ds\\
&+\int_t^{\infty}\Big\langle\big(\hatQ+2\hatS^{\top}\hatK^{(i)}+\hatK^{(i)\top}\hatR\hatK^{(i)}\big){\E_t}[X^{(i)}(s)],{\E_t}[X^{(i)}(s)]\Big\rangle\ds.\label{algorithm 1-evaluation-2}
\end{aligned}
\end{equation}}\\
6: {\bf Policy Improvement} (Update): Update $K^{(i+1)}$ and $\hatK^{(i+1)}$ by 
{\small\begin{equation}\label{algorithm 1-improvement}
\begin{aligned}
K^{(i+1)}&=-(R+D^{\top} P^{(i+1)} D)^{-1}(B^{\top} P^{(i+1)}+D^{\top} P^{(i+1)}C+S),
\end{aligned}
\end{equation}}
{\small\begin{equation}\label{algorithm 1-improvement-2}
\begin{aligned}
\hatK^{(i+1)}&=-(\hatR+\hatD^{\top} P^{(i+1)}\hatD)^{-1}(\hatB^{\top}\hatP^{(i+1)}+\hatD^{\top} P^{(i+1)}\hatC+\hatS).
\end{aligned} 
\end{equation} }
\\
7: If $\|P^{(i+1)}-P^{(i)}\|<\varepsilon$ and $\|\hatP^{(i+1)}-\hatP^{(i)}\|<\varepsilon$, then {\bf stop}.\\
8: $i\leftarrow i+1$ and go to step 3. \}\smallskip\\
\label{algorithm}
\end{algorithm}

Huang {\em et al.} \cite{Huang-Li-Yong-2015} solved the GAREs \eqref{Riccati_Eq1}-\eqref{Riccati_Eq2} to get $(P,\hatP)$ using SDP method. Their method necessitates all of the coefficient information in the system and is thus offline. Instead of solving Ricctai equations directly, we propose calculating $(P,\hatP)$ using Algorithm \ref{algorithm}. This method does not require all system coefficient information and observes the trajectories online. 

Denote the right sides of \eqref{algorithm 1-evaluation-1} and \eqref{algorithm 1-evaluation-2} as the objective functions $\cJ_0(t,x;K^{(i)},X^{(i)})$ and $\cJ(t,x;K^{(i)},X^{(i)})$. Our method indeed focuses on reinforcing the objective functions to compute the pair $(P^{(i+1)},\hatP^{(i+1)})$ and the control gain $(K^{(i+1)},\hatK^{(i+1)})$, respectively.
Algorithm \ref{algorithm} does not involve the coefficients $A$ and $\hatA$, so it is a partially model-free algorithm. In fact, the system's information is already embedded in the state trajectory. The other coefficients $C, \barC, B, \barB, D$, and $\barD$ in the system \eqref{MF-system} are used to improve the policy in \eqref{algorithm 1-improvement}-\eqref{algorithm 1-improvement-2}.

To guarantee the Algorithm \ref{algorithm} work, at each step $i$, we need to prove that the updated control gains $K^{(i)}$ and $\hatK^{(i)}$ in the Policy Improvement are stabilizers and the pair $(P^{(i)},\hatP^{(i)})$ is unique solvable. Moreover, we also need to make sure the sequence $\{(P^{(i)},\hatP^{(i)})\}_{i=0}^{\infty}$ is convergent. 
We will not directly establish these properties for the Algorithm \ref{algorithm}; instead, we devise a new algorithm in the following part and then show that our Algorithm \ref{algorithm} inherits these properties from this new algorithm. 

We first define {\bf Lyapunov Recursion} as follows: 
\begin{equation}\label{Lyapunov Recursion}
\begin{aligned}
&(A+BK^{(i)})^{\top} P^{(i+1)}+P^{(i+1)}(A+BK^{(i)})\\
&~~~+(C+DK^{(i)})^{\top} P^{(i+1)}(C+DK^{(i)})\\
&~~~\qquad+K^{(i)\top} RK^{(i)}+S^{\top} K^{(i)}+K^{(i)\top} S+Q=0
\end{aligned}
\end{equation}
and 
\begin{equation}\label{Lyapunov Recursion-2}
\begin{aligned}
&(\hatA+\hatB\hatK^{(i)})^{\top}\hatP^{(i+1)}+\hatP^{(i+1)}(\hatA+\hatB\hatK^{(i)})\\
&~~~+(\hatC+\hatD\hatK^{(i)})^{\top} P^{(i+1)}(\hatC+\hatD\hatK^{(i)})\\
&~~~\qquad+\hatK^{(i)\top}\hatR\hatK^{(i)}+\hatS^{\top}\hatK^{(i)}+\hatK^{(i)\top}\hatS+\hatQ=0.
\end{aligned}
\end{equation}
Combining Lyapunov Recursion with Policy improvement \eqref{algorithm 1-improvement}-\eqref{algorithm 1-improvement-2}, we construct a new policy iteration called {\bf Lyapunov recursion scheme}. This scheme requires all the coefficients of the system \eqref{MF-system}. The feasibility and convergence of this algorithm are contained in the following result. 

\begin{theorem}\label{th-2}
Assume that the PDC \eqref{CPD} holds and $(K^{(0)},\hatK^{(0)})$ is a stabilizer for system \eqref{MF-system}. 
Then all the control gains $\{(K^{(i)},\hatK^{(i)})\}_{i=1}^\infty$ in Lyapunov Recursion \eqref{Lyapunov Recursion}-\eqref{Lyapunov Recursion-2} updated by \eqref{algorithm 1-improvement}-\eqref{algorithm 1-improvement-2} are stabilizers, and a solution $(P^{(i+1)},\hatP^{(i+1)})\in (\cS_{++}^n)^2$ to Lyapunov Recursion \eqref{Lyapunov Recursion}-\eqref{Lyapunov Recursion-2} exists and is unique in each step. 
\end{theorem}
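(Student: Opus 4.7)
I would argue by induction on $i$ with inductive hypothesis that $(K^{(i)},\hatK^{(i)})$ is a stabilizer of system \eqref{MF-system}; the base case $i=0$ is the assumption of the theorem. The inductive step splits into (a) unique solvability of \eqref{Lyapunov Recursion}-\eqref{Lyapunov Recursion-2} in $(\cS_{++}^n)^2$, which is handled by invoking Proposition \ref{lemma-stabilizer}, and (b) stabilizability of the updated pair $(K^{(i+1)},\hatK^{(i+1)})$ defined by \eqref{algorithm 1-improvement}-\eqref{algorithm 1-improvement-2}, which is established by the standard policy-iteration completion-of-squares trick applied twice. The two key algebraic ingredients I would reuse from the proof of Theorem \ref{th-1} are the positivity \eqref{Q} of the source terms (via PDC) and the Lyapunov form \eqref{Lyapunov K}.

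\textbf{Unique solvability.} For \eqref{Lyapunov Recursion}, I would apply Proposition \ref{lemma-stabilizer} with $\Lambda = K^{(i)\top} R K^{(i)} + S^\top K^{(i)} + K^{(i)\top} S + Q$; completing the square as in \eqref{Q} and invoking the PDC \eqref{CPD} (via the block-diagonal structure of $\bfR$ and $\bfQ-\bfS^\top\bfR^{-1}\bfS$) shows $\Lambda\in\cS_{++}^n$, so a unique $P^{(i+1)}\in\cS_{++}^n$ exists. For \eqref{Lyapunov Recursion-2}, with $P^{(i+1)}\in\cS_{++}^n$ in hand, the source term $\widehat\Lambda=(\hatC+\hatD\hatK^{(i)})^\top P^{(i+1)}(\hatC+\hatD\hatK^{(i)})+\hatK^{(i)\top}\hatR\hatK^{(i)}+\hatS^\top\hatK^{(i)}+\hatK^{(i)\top}\hatS+\hatQ$ decomposes into a PSD piece and a piece lying in $\cS_{++}^n$ (by the analogous completion-of-square argument applied to $(\hatQ,\hatS,\hatR)$), hence $\widehat\Lambda\in\cS_{++}^n$ and Proposition \ref{lemma-stabilizer} again yields a unique $\hatP^{(i+1)}\in\cS_{++}^n$.

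\textbf{Stabilizability of the updated gains.} Set $M=R+D^\top P^{(i+1)}D$ and $N=B^\top P^{(i+1)}+D^\top P^{(i+1)}C+S$ so that $K^{(i+1)}=-M^{-1}N$. Substituting $K^{(i+1)}$ into the left-hand side of \eqref{Lyapunov K} with $K$ replaced by $K^{(i+1)}$, expanding, and using \eqref{Lyapunov Recursion} to eliminate $A^\top P^{(i+1)}+P^{(i+1)}A+C^\top P^{(i+1)}C+Q$ produces, after completing the square in $K^{(i)}-K^{(i+1)}$, the identity
\begin{equation*}
\begin{aligned}
&(A+BK^{(i+1)})^\top P^{(i+1)}+P^{(i+1)}(A+BK^{(i+1)})+(C+DK^{(i+1)})^\top P^{(i+1)}(C+DK^{(i+1)})\\
&\quad=-\big[K^{(i+1)\top}RK^{(i+1)}+S^\top K^{(i+1)}+K^{(i+1)\top}S+Q\big]-(K^{(i)}-K^{(i+1)})^\top M(K^{(i)}-K^{(i+1)}),
\end{aligned}
\end{equation*}
whose right-hand side is strictly negative because the bracket lies in $\cS_{++}^n$ (apply \eqref{Q} at $K^{(i+1)}$) and the second term is PSD. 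Hence the first inequality of \eqref{stabilizable-condition} holds with $P=P^{(i+1)}$. The same manipulation applied to \eqref{Lyapunov Recursion-2} with $\hatK^{(i+1)}=-\widehat M^{-1}\widehat N$, $\widehat M=\hatR+\hatD^\top P^{(i+1)}\hatD$, $\widehat N=\hatB^\top\hatP^{(i+1)}+\hatD^\top P^{(i+1)}\hatC+\hatS$, will yield
\begin{equation*}
\begin{aligned}
&(\hatA+\hatB\hatK^{(i+1)})^\top\hatP^{(i+1)}+\hatP^{(i+1)}(\hatA+\hatB\hatK^{(i+1)})\\
&\quad=-(\hatC+\hatD\hatK^{(i+1)})^\top P^{(i+1)}(\hatC+\hatD\hatK^{(i+1)})-\big[\hatK^{(i+1)\top}\hatR\hatK^{(i+1)}+\hatS^\top\hatK^{(i+1)}+\hatK^{(i+1)\top}\hatS+\hatQ\big]\\
&\qquad-(\hatK^{(i)}-\hatK^{(i+1)})^\top\widehat M(\hatK^{(i)}-\hatK^{(i+1)}),
\end{aligned}
\end{equation*}
which is again strictly negative, verifying the second inequality of \eqref{stabilizable-condition}. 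Proposition \ref{lemma-stabilizer} then certifies that $(K^{(i+1)},\hatK^{(i+1)})$ is a stabilizer, closing the induction.

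\textbf{Main obstacle.} The delicate step is the $\widehat{\cdot}$ calculation: equation \eqref{Lyapunov Recursion-2} carries the coupling term $(\hatC+\hatD\hatK^{(i)})^\top P^{(i+1)}(\hatC+\hatD\hatK^{(i)})$ involving $P^{(i+1)}$ (not $\hatP^{(i+1)}$) and $\hatK^{(i)}$ (not $\hatK^{(i+1)}$). I expect that after eliminating $\hatA^\top\hatP^{(i+1)}+\hatP^{(i+1)}\hatA+\hatC^\top P^{(i+1)}\hatC+\hatQ$ via \eqref{Lyapunov Recursion-2} and completing the square in $\hatK^{(i)}-\hatK^{(i+1)}$, this coupling term reassembles with $\hatK^{(i+1)}$ in place of $\hatK^{(i)}$ and appears on the right-hand side with a sign that reinforces, rather than destroys, strict negative definiteness; this algebraic bookkeeping has to be carried out carefully since it is where the two Lyapunov equations interact.
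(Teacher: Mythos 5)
Your proposal is correct and follows essentially the same route as the paper: induction, Proposition \ref{lemma-stabilizer} with positive-definite source terms (guaranteed by the PDC via completion of squares as in \eqref{Q}) for unique solvability in $(\cS_{++}^n)^2$, and the completion-of-squares identity producing the $(K^{(i)}-K^{(i+1)})^\top M(K^{(i)}-K^{(i+1)})$ cross term to verify \eqref{stabilizable-condition}. The only cosmetic differences are an index shift (the paper certifies $K^{(i)}$ as a stabilizer using the previous $P^{(i)}$, matching your identity after relabeling, cf.\ \eqref{stabilizer2}) and that the paper cites Theorem 2.1 of \cite{Li-Li-Peng-Xu-2022} for the non-hat inequality \eqref{stabilizer1} where you derive it explicitly.
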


\begin{proof}
We prove by mathematical induction. Since $(K^{(0)},\hatK^{(0)})$ is a stabilizer, by Proposition \ref{lemma-stabilizer}, there exists a unique solution $(P^{(1)},\hatP^{(1)})\in(\cS_{++}^n)^2$ for Lyapunov Recursion \eqref{Lyapunov Recursion}-\eqref{Lyapunov Recursion-2}.

Suppose $i\geq 1$, $(K^{(i-1)},\hatK^{(i-1)})$ is a stabilizer and $(P^{(i)},\hatP^{(i)})\in(\cS^n_{++})^2$ is the unique solution to Lyapunov recursion \eqref{Lyapunov Recursion}-\eqref{Lyapunov Recursion-2}.
We now show $(K^{(i)},\hatK^{(i)})$ in the form of \eqref{KK} with $(P^{(i)},\hatP^{(i)})$ is a stabilizer and a solution $(P^{(i+1)},\hatP^{(i+1)})\in(\cS^n_{++})^2$ to \eqref{Lyapunov Recursion}-\eqref{Lyapunov Recursion-2} exists and is unique.

From Theorem 2.1 in \cite{Li-Li-Peng-Xu-2022}, 
\begin{equation}\label{stabilizer1}
\begin{aligned}
&(A+BK^{(i)})^{\top} P^{(i)}+P^{(i)}(A+BK^{(i)})\\&~~~~~~+(C+DK^{(i)})^{\top} P^{(i)}(C+DK^{(i)})<0. 
\end{aligned}
\end{equation}
So by Proposition \ref{lemma-stabilizer}, Lyapunov Recursion \eqref{Lyapunov Recursion} admits a unique solution $P^{(i+1)}\in\cS^n_{++}$.

Next, we prove that $\hatK^{(i)}$ satisfies the second inequality of \eqref{stabilizable-condition}. By some calculations, since $\hatQ-\hatS^{\top}\hatR^{-1}\hatS>0$, we obtain
\begin{equation}\label{stabilizer2}
\begin{aligned}
&\quad~(\hatA+\hatB\hatK^{(i)})^{\top}\hatP^{(i)}+\hatP^{(i)}(\hatA+\hatB\hatK^{(i)})\\
&=-\big[\hatQ-\hatS^{\top}\hatR^{-1}\hatS+(\hatK^{(i)}+\hatR^{-1}\hatS)^{\top}\hatR(\hatK^{(i)}+\hatR^{-1}\hatS)\big]\\
&\quad\quad-(\hatK^{(i-1)}-\hatK^{(i)})^{\top} (\hatR+\hatD^{\top} P^{(i)}\hatD)(\hatK^{(i-1)}-\hatK^{(i)})\\
&~~~-(\hatC+\hatD\hatK^{(i)})^{\top} P^{(i)}(\hatC+\hatD\hatK^{(i)})<0.\\
\end{aligned}
\end{equation} 
By \eqref{stabilizer1}, \eqref{stabilizer2} and Proposition \ref{lemma-stabilizer}, we conclude that $(K^{(i)},\hatK^{(i)})$ is a stabilizer. Moreover, since $\hatK^{(i)\top}\hatR\hatK^{(i)}+\hatS^{\top}\hatK^{(i)}+\hatK^{(i)\top}\hatS+\hatQ>0$ and $(\hatC+\hatD\hatK^{(i)})^{\top} P^{(i)}(\hatC+\hatD\hatK^{(i)})\geq0$, by Proposition \ref{lemma-stabilizer} again, 
the Lyapunov Recursion \eqref{Lyapunov Recursion-2} admits a unique solution $\hatP^{(i+1)}\in\cS^n_{++}$. This completes the proof.

\end{proof}

\begin{theorem}\label{theorem 3}
The iteration $\{(P^{(i)},\hatP^{(i)})\}_{i=1}^{\infty}$ of Lyapunov recursion scheme converges to the unique solution $(P,\hatP)\in(\cS^n_{++})^2$ of the GAREs \eqref{Riccati_Eq1}-\eqref{Riccati_Eq2}. 
\end{theorem}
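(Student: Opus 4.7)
My plan is a Kleinman-type monotone-convergence argument: first show that $\{P^{(i)}\}_{i\ge 1}$ is monotonically decreasing with lower bound $P$; then use this to derive the analogous property for $\{\hat P^{(i)}\}_{i\ge 1}$ despite its coupling through $P^{(i+1)}$; finally pass to the limit in Lyapunov Recursion and invoke the uniqueness part of Theorem \ref{th-1} to identify the limit.

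The central algebraic device is a completing-the-square identity: for any $K$ and any $P$ with $R+D^\top PD>0$,
\begin{equation*}
(A+BK)^\top P + P(A+BK) + (C+DK)^\top P(C+DK) + K^\top RK + S^\top K + K^\top S + Q = \cR(P) + (K-K^*(P))^\top(R+D^\top PD)(K-K^*(P)),
\end{equation*}
where $\cR(P)$ is the left-hand side of \eqref{Riccati_Eq1} and $K^*(P)=-(R+D^\top PD)^{-1}(B^\top P+D^\top PC+S)$, with an entirely parallel identity for the second GARE \eqref{Riccati_Eq2}. Applying this identity with $(K,P)=(K^{(i-1)},P^{(i)})$ together with Lyapunov Recursion \eqref{Lyapunov Recursion} at step $i-1$ gives $\cR(P^{(i)}) = -(K^{(i-1)}-K^{(i)})^\top(R+D^\top P^{(i)}D)(K^{(i-1)}-K^{(i)})\le 0$. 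Subtracting Lyapunov Recursion at step $i$ then yields
\[
(A+BK^{(i)})^\top(P^{(i)}-P^{(i+1)}) + (P^{(i)}-P^{(i+1)})(A+BK^{(i)}) + (C+DK^{(i)})^\top(P^{(i)}-P^{(i+1)})(C+DK^{(i)}) = \cR(P^{(i)}) \le 0,
\]
so Proposition \ref{lemma-stabilizer} applied to the stabilizer $K^{(i)}$ (via Theorem \ref{th-2}) gives $P^{(i)}\ge P^{(i+1)}$. The same recipe, compared instead against the GARE ($\cR(P)=0$), yields $P^{(i+1)}\ge P$. Hence $P\le P^{(i+1)}\le P^{(i)}$ for every $i\ge 1$, and monotone convergence in $\cS^n$ produces a limit $P^\star\ge P$ in $\cS_{++}^n$.

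For $\{\hat P^{(i)}\}$, the companion squaring-and-subtraction procedure produces
\[
(\hat A+\hat B\hat K^{(i)})^\top(\hat P^{(i)}-\hat P^{(i+1)}) + (\hat P^{(i)}-\hat P^{(i+1)})(\hat A+\hat B\hat K^{(i)}) = -(\hat C+\hat D\hat K^{(i)})^\top(P^{(i)}-P^{(i+1)})(\hat C+\hat D\hat K^{(i)}) + \widehat{\cR}(\hat P^{(i)},P^{(i)}),
\]
where $\widehat{\cR}(\hat P^{(i)},P^{(i)})\le 0$ by the parallel identity. The key observation is that $(\hat C+\hat D\hat K^{(i)})^\top(P^{(i)}-P^{(i+1)})(\hat C+\hat D\hat K^{(i)})\ge 0$ from the first step, so the right-hand side above is non-positive; Proposition \ref{lemma-stabilizer} then gives $\hat P^{(i)}\ge\hat P^{(i+1)}$, while comparison with $\hat P$ (using $P^{(i+1)}\ge P$ in the cross-term) yields $\hat P^{(i+1)}\ge\hat P$. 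Therefore $\hat P^{(i)}\to\hat P^\star\ge\hat P$ in $\cS_{++}^n$. By continuity of \eqref{algorithm 1-improvement}-\eqref{algorithm 1-improvement-2}, $K^{(i)}\to K^\star$ and $\hat K^{(i)}\to\hat K^\star$, where $(K^\star,\hat K^\star)$ are defined by \eqref{KK} with $(P^\star,\hat P^\star)$; passing to the limit in Lyapunov Recursion and invoking the squaring identities forces $\cR(P^\star)=0$ and $\widehat{\cR}(\hat P^\star,P^\star)=0$, so $(P^\star,\hat P^\star)$ solves the GAREs and uniqueness in Theorem \ref{th-1} yields $P^\star=P$, $\hat P^\star=\hat P$.

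The main obstacle I anticipate is the coupling in the $\hat P$ iteration, whose Lyapunov equation \eqref{Lyapunov Recursion-2} for $\hat P^{(i+1)}$ involves the companion quantity $P^{(i+1)}$, so a decoupled Kleinman argument is not directly available. The rescue is the previously established sign $P^{(i)}\ge P^{(i+1)}\ge P$, which forces the extra cross-term in the $\hat P$-difference Lyapunov equation to enter with precisely the favorable sign, preserving monotone convergence uniformly for both sequences.
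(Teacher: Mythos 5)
Your proposal is correct and follows essentially the same route as the paper: the paper likewise obtains $P^{(i)}\ge P^{(i+1)}$ from the Kleinman-type argument (citing Theorem 2.2 of \cite{Li-Li-Peng-Xu-2022}), then derives exactly your difference Lyapunov equation for $\hatP^{(i)}-\hatP^{(i+1)}$, in which the cross-term $(\hatC+\hatD\hatK^{(i)})^{\top}(P^{(i)}-P^{(i+1)})(\hatC+\hatD\hatK^{(i)})\ge 0$ and the squared term $\Delta\hatK^{(i)\top}(\hatR+\hatD^{\top}P^{(i)}\hatD)\Delta\hatK^{(i)}\ge 0$ give monotone decrease via Proposition \ref{lemma-stabilizer}, before passing to the limit to recover the GAREs. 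The only cosmetic difference is that you bound the sequences below by $(P,\hatP)$ while the paper bounds them below by $0$ and identifies the limit afterwards; both are valid.
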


\begin{proof}
By Theorem 2.2 in \cite{Li-Li-Peng-Xu-2022}, $P^{(i)}\geq P^{(i+1)}\geq0$ for $i=1,2,\cdots$, and $\{P^{(i)}\}_{i=1}^{\infty}$ converges to the unique solution $P\in\cS^n_{++}$ of the GARE \eqref{Riccati_Eq1}. 

Next, we prove that $\{\hatP^{(i)}\}_{i=1}^{\infty}$ converges to the unique solution of the GARE \eqref{Riccati_Eq2}. Assume $\hatP^{(i)}$ and $\hatP^{(i+1)}$ satisfy Lyapunov Recursion \eqref{Lyapunov Recursion-2} and denote
$\Delta\hatP^{(i+1)}=\hatP^{(i)}-\hatP^{(i+1)}$ and $\Delta\hatK^{(i)}=\hatK^{(i-1)}-\hatK^{(i)}$, by some calculations, we get 
\begin{equation}\label{Delta}
\begin{aligned}
&(\hatA+\hatB\hatK^{(i)})^{\top}\Delta\hatP^{(i+1)}+\Delta\hatP^{(i+1)}(\hatA+\hatB\hatK^{(i)})\\
&~~~+(\hatC+\hatD\hatK^{(i)})^{\top} \Delta P^{(i+1)}(\hatC+\hatD\hatK^{(i)})\\
&~~~+\Delta\hatK^{(i)^{\top}}(\hatR+\hatD^{\top} P^{(i)}\hatD)\Delta\hatK^{(i)}=0.
\end{aligned}
\end{equation}
Since $\hatK^{(i)}$ is a component of a stabilizer of the system \eqref{MF-system}, 
$(\hatC+\hatD\hatK^{(i)})^{\top} \Delta P^{(i+1)}(\hatC+\hatD\hatK^{(i)})\geq0$ and $\Delta\hatK^{(i)^{\top}}(\hatR+\hatD^{\top} P^{(i)}\hatD)\Delta\hatK^{(i)}\geq 0$, 
Lyapunov equation \eqref{Delta} admits a unique solution $\Delta\hatP^{(i+1)}\geq 0$ by Proposition \ref{lemma-stabilizer}. Therefore, $\{\hatP^{(i)}\}_{i=1}^{\infty}$ is monotonically decreasing. Notice $\hatP^{(i)}> 0$, so $\{\hatP^{(i)}\}_{i=1}^{\infty}$ converges to some $\hatP\geq 0$. When $i\rightarrow\infty$, $$\hatK^{(i)}=(\hatR+\hatD^{\top} P^{(i+1)}\hatD)^{-1}(\hatB^{\top}\hatP^{(i+1)}+\hatD^{\top} P^{(i+1)}\hatC+\hatS)$$
converges to $\hatK$ in the form of \eqref{KK}. By some calculations, we confirm that $\hatP\in\cS^n_{++}$ is the unique solution of \eqref{Riccati_Eq2}. The proof is completed. 

\end{proof}

Theorem \ref{th-2} and Theorem \ref{theorem 3} theoretically confirm the Lyapunov recursion scheme's stabilizability  and convergence. It also can be solved in implementation by Kronecker product to obtain the {\it explicit} solution $(P^{(i)},\hatP^{(i)})$. However, in practice, we sometimes only know partial coefficients and observe the state trajectories, which also causes difficulties in solving $(P^{(i)},\hatP^{(i)})$ by the Lyapunov recursion scheme directly. Moreover, in Lyapunov recursion scheme \eqref{Lyapunov Recursion-2}, the calculation of $\hatP^{(i+1)}$ depends on $P^{(i+1)}$ while $P^{(i+1)}$ and $\hatP^{(i+1)}$ in \eqref{algorithm 1-evaluation-1}-\eqref{algorithm 1-evaluation-2} of Algorithm \ref{algorithm} are calculated independently. 

Based on Theorem \ref{th-2} and Theorem \ref{theorem 3}, we establish the stabilizability and convergence of Algorithm \ref{algorithm}. 

\begin{theorem}
Assume that the PDC \eqref{CPD} holds and $(K^{(0)},\hatK^{(0)})$ is a stabilizer for the system \eqref{MF-system}. If $\widehat C+\widehat D\widehat K^{(i)}$ is invertible, then Policy Evaluation \eqref{algorithm 1-evaluation-1}-\eqref{algorithm 1-evaluation-2} admits a unique solution $(P^{(i+1)},\hatP^{(i+1)})\in (\cS_{++}^n)^2$. Moreover, $\{(P^{(i)},\hatP^{(i)})\}_{i=1}^\infty$ converges to the unique solution to GAREs \eqref{Riccati_Eq1}-\eqref{Riccati_Eq2} and all the control gains $\{(K^{(i)},\hatK^{(i)})\}_{i=1}^\infty$ in the form of \eqref{algorithm 1-improvement}-\eqref{algorithm 1-improvement-2} are stabilizers. \end{theorem}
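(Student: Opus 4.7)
The strategy is to establish an equivalence, at each step $i$, between the Policy Evaluation equations \eqref{algorithm 1-evaluation-1}--\eqref{algorithm 1-evaluation-2} of Algorithm \ref{algorithm} and the Lyapunov Recursion \eqref{Lyapunov Recursion}--\eqref{Lyapunov Recursion-2}; once this equivalence is secured, the stabilizability and convergence transfer directly from Theorem \ref{th-2} and Theorem \ref{theorem 3}. I will argue by induction on $i$: assuming $(K^{(i)},\hatK^{(i)})$ is a stabilizer and that Lyapunov Recursion produces a unique $(P^{(i+1)},\hatP^{(i+1)})\in(\cS_{++}^n)^2$, I check that this same pair solves the Policy Evaluation identities. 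For this, I apply It\^o's formula to $\langle P^{(i+1)}(X^{(i)}-\E_t[X^{(i)}]), X^{(i)}-\E_t[X^{(i)}]\rangle$ along \eqref{MF-systemK} with closed-loop gains $(K^{(i)},\hatK^{(i)})$, integrate from $t$ to $\tau$, and take $\E_t$. Since $\E_t[X^{(i)}]$ evolves deterministically under $\hatA+\hatB\hatK^{(i)}$ and $\E_t[X^{(i)}-\E_t[X^{(i)}]]=0$, the cross term in the quadratic variation drops out under $\E_t$; substituting \eqref{Lyapunov Recursion} identifies the state-cost weight as $-(Q+2S^{\top} K^{(i)}+K^{(i)\top} RK^{(i)})$, and letting $\tau\to\infty$ the stabilizability-driven boundary term vanishes, producing exactly \eqref{algorithm 1-evaluation-1}. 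An analogous computation for the deterministic function $\langle\hatP^{(i+1)}\E_t[X^{(i)}],\E_t[X^{(i)}]\rangle$, combined with \eqref{Lyapunov Recursion-2} and the just-derived \eqref{algorithm 1-evaluation-1} to eliminate the $P^{(i+1)}$-term, yields \eqref{algorithm 1-evaluation-2}.

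For uniqueness of $P^{(i+1)}$ in \eqref{algorithm 1-evaluation-1}, I take the difference of two candidate solutions: set $M:=P_1-P_2\in\cS^n$ and use $\E_t[X^{(i)}(s)]=e^{(\hatA+\hatB\hatK^{(i)})(s-t)}x$ to rewrite the vanishing identity as $x^{\top}\Xi x=0$ for all $x\in\bbR^n$, where $\Xi:=\int_t^\infty e^{(\hatA+\hatB\hatK^{(i)})^{\top}(s-t)}(\hatC+\hatD\hatK^{(i)})^{\top} M(\hatC+\hatD\hatK^{(i)})e^{(\hatA+\hatB\hatK^{(i)})(s-t)}\,\ds$, which is finite because $\hatA+\hatB\hatK^{(i)}$ is Hurwitz by the induction hypothesis. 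Hence $\Xi=0$, and the Lyapunov equation $(\hatA+\hatB\hatK^{(i)})^{\top}\Xi+\Xi(\hatA+\hatB\hatK^{(i)})+(\hatC+\hatD\hatK^{(i)})^{\top} M(\hatC+\hatD\hatK^{(i)})=0$ satisfied by $\Xi$ forces $(\hatC+\hatD\hatK^{(i)})^{\top} M(\hatC+\hatD\hatK^{(i)})=0$; the hypothesis that $\hatC+\hatD\hatK^{(i)}$ is invertible then yields $M=0$. Uniqueness of $\hatP^{(i+1)}$ from \eqref{algorithm 1-evaluation-2} is immediate from the fact that $\langle\hatP^{(i+1)}x,x\rangle$ is prescribed for every $x$, and the positive definiteness of both matrices is inherited from Theorem \ref{th-2} through the equivalence.

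With the equivalence in hand, the Policy Improvement step \eqref{algorithm 1-improvement}--\eqref{algorithm 1-improvement-2} produces precisely the same $(K^{(i+1)},\hatK^{(i+1)})$ as the Lyapunov recursion scheme, so Theorem \ref{th-2} inductively certifies that every $(K^{(i)},\hatK^{(i)})$ is a stabilizer and Theorem \ref{theorem 3} delivers convergence of $\{(P^{(i)},\hatP^{(i)})\}$ to the unique solution of the GAREs \eqref{Riccati_Eq1}--\eqref{Riccati_Eq2}. The main obstacle is the uniqueness step for $P^{(i+1)}$: the integral identity \eqref{algorithm 1-evaluation-1} only probes $P^{(i+1)}$ through the composition $(\hatC+\hatD\hatK^{(i)})\E_t[X^{(i)}]$, so the invertibility of $\hatC+\hatD\hatK^{(i)}$ is exactly the condition needed to ensure this probe sees the whole symmetric matrix; without it the identity is degenerate and extra structural information would be required to pin $P^{(i+1)}$ down.
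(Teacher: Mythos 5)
Your proposal is correct and follows essentially the same route as the paper: establish the equivalence between the Policy Evaluation identities \eqref{algorithm 1-evaluation-1}--\eqref{algorithm 1-evaluation-2} and the Lyapunov Recursion \eqref{Lyapunov Recursion}--\eqref{Lyapunov Recursion-2} via It\^o's formula along the closed-loop dynamics, prove uniqueness from the exponential form of $\E_t[X^{(i)}]$ together with the invertibility of $\hatC+\hatD\hatK^{(i)}$, and then import stabilizability and convergence from Theorem \ref{th-2} and Theorem \ref{theorem 3}. The only cosmetic difference is in the uniqueness step, where you identify the vanishing integral $\Xi$ as the solution of a Lyapunov equation to force $(\hatC+\hatD\hatK^{(i)})^{\top}M(\hatC+\hatD\hatK^{(i)})=0$, whereas the paper differentiates the integral identity with respect to $t$; these amount to the same computation.
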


\begin{proof}
We need to prove that solving Policy Evaluation \eqref{algorithm 1-evaluation-1}-\eqref{algorithm 1-evaluation-2} in Algorithm \ref{algorithm} is equivalent to solving the Lyapunov Recursion \eqref{Lyapunov Recursion}-\eqref{Lyapunov Recursion-2}. 

Firstly, suppose $(K^{(i)},\hatK^{(i)})$ is a stabilizer for system \eqref{MF-system}. 
Under the PDC \eqref{CPD}, we have
$K^{(i)\top} RK^{(i)}+S^{\top} K^{(i)}+K^{(i)\top} S+Q>0$. 
By Proposition \ref{lemma-stabilizer}, Lyapunov Recursion \eqref{Lyapunov Recursion}-\eqref{Lyapunov Recursion-2} admits the unique solution $(P^{(i+1)},\hatP^{(i+1)})\in(\cS^n_{++})^2$. 

Taking $K=K^{(i)}$ and $\hatK=\hatK^{(i)}$ in \eqref{MF-systemK} and \eqref{MF-Ex}, applying It\^o's formula to $\big\langle P^{(i+1)}(X^{(i)}-{\E_t}[X^{(i)}]), X^{(i)}-{\E_t}[X^{(i)}]\big\rangle$, then integrating on $[t,\infty)$ and taking conditional expectation ${\E_t}[\cdot]$ on both sides, we obtain 
\begin{align}
&0={\E_t}\int_t^\infty\bigg\{\Big\langle\big((A+BK^{(i)})^{\top} P^{(i+1)}+P^{(i+1)}(A+BK^{(i)})\nonumber\\
&~~~~~+(C+DK^{(i)})^{\top} P^{(i+1)}(C+DK^{(i)})\big)\nonumber\\
&~~~~~~~~~\times(X^{(i)}(s)-{\E_t}[X^{(i)}(s)]), X^{(i)}(s)-{\E_t}[X^{(i)}(s)]\Big\rangle\label{integrating-1}
\end{align}
\[
+\Big\langle(\hatC+\hatD\hatK^{(i)})^{\top} P^{(i+1)}(\hatC+\hatD\hatK^{(i)}){\E_t}[X^{(i)}],{\E_t}[X^{(i)}]\Big\rangle\bigg\}\ds.
\]
Since $P^{(i+1)}\in\cS^n_{++}$ is the unique solution of Lyapunov Recursion \eqref{Lyapunov Recursion}, then \eqref{integrating-1} confirms Policy Evaluation \eqref{algorithm 1-evaluation-1}.

Let $\hatK=\hatK^{(i)}$ in \eqref{MF-Ex}, differentiating $\big\langle\hatP^{(i+1)}{\E_t}[X^{(i)}], {\E_t}[X^{(i)}]\big\rangle$ and integrating it from $t$ to $\infty$ yields
\begin{align}
&~~~~-\big\langle\hatP^{(i+1)}x,x\big\rangle\nonumber\\
&=\int_t^{\infty}\Big\langle\big((\hatA+\hatB\hatK^{(i)})^{\top}\hatP^{(i+1)}+\hatP^{(i+1)}(\hatA+\hatB\hatK^{(i)})\big)\nonumber\\
&~~~~\qquad\qquad\qquad\qquad\times{\E_t}[X^{(i)}(s)],{\E_t}[X^{(i)}(s)]\Big\rangle\ds.\label{integrating-2}
\end{align}
Since $\hatP^{(i+1)}\in\cS^n_{++}$ is the unique solution of Lyapunov Recursion \eqref{Lyapunov Recursion-2}, then \eqref{integrating-2} confirms 
\[
\begin{aligned}\label{Eqi}
&\big\langle\hatP^{(i+1)}x,x\big\rangle=\int_t^{\infty}\Big\langle\big(\hatQ+2\hatS^{\top}\hatK^{(i)}+\hatK^{(i)\top}\hatR\hatK^{(i)}\\\nonumber&\!+(\hatC\!+\!\hatD\hatK^{(i)})^{\top} \!\!P^{(i+1)}(\hatC\!+\!\hatD\hatK^{(i)})\big){\E_t}[X^{(i)}(s)],{\E_t}[X^{(i)}(s)]\Big\rangle\ds.\nonumber
\end{aligned}
\]
Combining with \eqref{algorithm 1-evaluation-1}, Policy Evaluation \eqref{algorithm 1-evaluation-2} is confirmed. Also, the unique solution $(P^{(i+1)},\hatP^{(i+1)})\in(\cS^n_{++})^2$ of Lyapunov Recursion \eqref{Lyapunov Recursion}-\eqref{Lyapunov Recursion-2} satisfies \eqref{algorithm 1-evaluation-1}-\eqref{algorithm 1-evaluation-2}, which implies that existence of solution to \eqref{algorithm 1-evaluation-1}-\eqref{algorithm 1-evaluation-2}.

Next, we prove that the solution $(P^{(i+1)},\hatP^{(i+1)})$ of \eqref{algorithm 1-evaluation-1}-\eqref{algorithm 1-evaluation-2} is unique. Suppose there exists another solution $
\widetilde P^{(i+1)}$
to \eqref{algorithm 1-evaluation-1}, then 
\begin{equation}
\begin{aligned}
&\int_t^{\infty}({\E_t}[X^{(i)}(s)])^\top(\hatC+\hatD\hatK^{(i)}) ^\top(P^{(i+1)}-\widetilde P^{(i+1)})\\
&\qquad\qquad\qquad\qquad\qquad\times (\hatC+\hatD\hatK^{(i)}){\E_t}[X^{(i)}(s)]\ds=0.
\label{unique-1}
\end{aligned}
\end{equation}
Taking $\hatK=\widehat K^{(i)}$ in \eqref{MF-Ex}, we obtain $\E_t[X^{(i)}(s)]=e^{(\hatA+\hatB\hatK^{(i)})(s-t)}x$ and insert it into \eqref{unique-1}, since $x$ is chosen randomly and $e^{-(\hatA+\hatB\hatK^{(i)})t}$ is invertible, then
\[
\begin{aligned}
	&\int_t^{\infty}(e^{(\hatA+\hatB\hatK^{(i)})s})^\top(\hatC+\hatD\hatK^{(i)})^\top(P^{(i+1)}-\widetilde P^{(i+1)})
\\&\qquad\qquad\qquad\qquad\qquad\times (\hatC+\hatD\hatK^{(i)})e^{(\hatA+\hatB\hatK^{(i)})s}\ds=0.
\end{aligned}
\]
Taking the derivative of $t$, we get $P^{(i+1)}-\widetilde P^{(i+1)}=0$ since $e^{-(\hatA+\hatB\hatK^{(i)})s}$ and $\widehat C+\widehat D\widehat K^{(i)}$ are invertible. So the solution to \eqref{algorithm 1-evaluation-1} is unique. The unique solvability of \eqref{algorithm 1-evaluation-2} can be proved similarly.

Because Lyapunov Recursion \eqref{Lyapunov Recursion}-\eqref{Lyapunov Recursion-2}  admits a unique solution satisfying  Policy Evaluation \eqref{algorithm 1-evaluation-1}-\eqref{algorithm 1-evaluation-2} and the solution of \eqref{algorithm 1-evaluation-1}-\eqref{algorithm 1-evaluation-2} is unique, we conclude that \eqref{Lyapunov Recursion}-\eqref{Lyapunov Recursion-2} and \eqref{algorithm 1-evaluation-1}-\eqref{algorithm 1-evaluation-2} are equivalent.  

From Theorems \ref{th-2}-\ref{theorem 3}, the assertion of this theorem is confirmed. 
\end{proof}

\section{Implementation of RL Algorithm on Infinite Horizon}\label{Sec4}


\subsection{Vectorization and Kronecker product theory}
In order to implement Algorithm \ref{algorithm}, we need to solve $P^{(i+1)}$ and $\hatP^{(i+1)}$ from \eqref{algorithm 1-evaluation-1}-\eqref{algorithm 1-evaluation-2}.
To overcome this critical difficulty, we adopt vectorization method and Kronecker product theory; see \cite{Murray-Cox-Lendaris-Saeks-2002} for details. 
The approach is explained in detail below. 

Define $\vec(A)$ for $A\in \bbR^{n\times m}$ as a vectorization map from a matrix into an $nm$-dimensional column vector for compact representations, which stacks the columns of $A$ on top of one another. 
For $P\in\cS^{n}$, we define an operator $\vec^+(P)$, which maps $P$ into an $\frac{n(n+1)}{2}$-dimensional vector by stacking the columns corresponding to the diagonal and lower triangular parts of $P$ on top of one another where the off-diagonal terms of $P$ are double. By \cite{Murray-Cox-Lendaris-Saeks-2002}, there exists a matrix $\cT\in \bbR^{n^2\times \frac{n(n+1)}{2}}$ with $\mathrm{rank}(\cT)=\frac{n(n+1)}{2}$ such that $\vec(P)=\cT\vec^+(P)$ for any $P\in\cS^n$. 

Let $A\otimes B$ be a Kronecker product of matrices $A$ and $B$. If $A$, $B$, and $C$ have appropriate dimensions, then we have $\vec(ABC)=(C^{\top}\otimes A)\vec(B)$. Denoting $\cK(A)=A^{\top}\otimes A^{\top}$.

Since $P^{(i+1)}$ and $\hatP^{(i+1)}$ are symmetric, there are $\frac{n(n+1)}{2}$ independent parameters in both $P^{(i+1)}$ and $\hatP^{(i+1)}$. For solving the $\frac{n(n+1)}{2}$ parameters in $P^{(i+1)}$ and $\hatP^{(i+1)}$ respectively, we need to randomly choose $N\geq\frac{n(n+1)}{2}$ different initial state $x_j\in \bbR^n$ to generate corresponding trajectories $X_j(s)=X(s;t,x_j)$ on horizon $[t,\infty)$ with $j=1,2,\cdots,N$.

To solve $P^{(i+1)}$ and $\hatP^{(i+1)}$, we first rewrite the left sides of \eqref{algorithm 1-evaluation-1}-\eqref{algorithm 1-evaluation-2} in terms of Kronecker product as follows:
{\small\begin{equation*}
	\begin{aligned}
&~~~\int_t^{\infty}\Big\langle P^{(i+1)}(\hatC+\hatD\hatK^{(i)}){\E_t}[X_j^{(i)}(s)],(\hatC+\hatD\hatK^{(i)}){\E_t}[X_j^{(i)}(s)]\Big\rangle\ds\\
&=\int_t^{\infty}\cK\Big((\hatC+\hatD\hatK^{(i)}){\E_t}[X_j^{(i)}(s)]\Big)\ds\cdot\big(\cT\vec^+(P^{(i+1)})\big)\\
&=:\cI\Big((\hatC+\hatD\hatK^{(i)}){\E_t}[X_j^{(i)}(s)]\Big)~\cT\vec^+(P^{(i+1)})
\end{aligned}
\end{equation*}}
and $\big\langle\hatP^{(i+1)}x_j,x_j\big\rangle=\cK(x_j)\cT\vec^+(\hatP^{(i+1)})$.

Then, we reinforce the objective functions $\cJ_0(t,x_j;K^{(i)},X_j^{(i)})$ and $\cJ(t,x_j;K^{(i)},X_j^{(i)})$ with respect to $X_j^{(i)}$ with $j=1,2,\cdots,N$ and $K^{(i)}$. Denote \[\cI_{\mathbf X}=\left[ \begin{array}{cc} \cI\big((\hatC+\hatD\hatK^{(i)}){\E_t}[X_1^{(i)}(s)]\Big) \\ \cI\big((\hatC+\hatD\hatK^{(i)}){\E_t}[X_2^{(i)}(s)]\Big)\\
\vdots\\
\cI\big((\hatC+\hatD\hatK^{(i)}){\E_t}[X_N^{(i)}(s)]\Big) \end{array}
\right],~\cK_{\mathbf x}=\left[ \begin{array}{cc} \cK(x_1) \\ \cK(x_2)\\
\vdots\\
\cK(x_N) \end{array}
\right],\] 
\[\mathbb J_0=\left[\begin{array}{cc} \cJ_0(t,x_1;K^{(i)},X^{(i)}_1)\\ \cJ_0(t,x_2;K^{(i)},X^{(i)}_2)\\
\vdots\\
\cJ_0(t,x_N;K^{(i)},X^{(i)}_N)\end{array}
\right],~
\mathbb J=\left[\begin{array}{cc} \cJ(t,x_1;K^{(i)},X^{(i)}_1)\\ \cJ(t,x_2;K^{(i)},X^{(i)}_2)\\
\vdots\\
\cJ(t,x_N;K^{(i)},X^{(i)}_N)\end{array}
\right].
\]

Similar to Assumption 3.2 in Xu {\em et al.} \cite{Xu-Shen-Huang-2023}, we introduce the following assumption.
\begin{assumption}\label{ass4}
	There exists an $N_0>0$ such that for all $N\geq N_0$, ${\rm rank}(\mathcal I_{\bf X}\mathcal T)=\frac{n(n+1)}{2}$ and ${\rm rank}(\mathcal K_{\bf x}\mathcal T)=\frac{n(n+1)}{2}$.
\end{assumption}

In practice, we derive the conditional expectation $\E_t[X^{(i)}_j(\cdot)]$ by calculating the mean value based on $H$ sample paths $X_{j,h}$ with $h=1,2,\cdots,H$. Precisely, $
\E_t[X_{j}^{(i)}(s)]\approx\frac{1}{H}\sum_{h=1}^{H}X_{j,h}^{(i)}(s).$ Moreover, $\cJ_0(t,x_j;K^{(i)},X_j^{(i)})$ is calculated by $H$ sample paths with the data sampled at times $s_l\geq0$ with $l=1,2, \cdots, L$, where $L$ is large enough,
\[
\begin{aligned}
&~~~\cJ_0(t,x_j;K^{(i)},X_j^{(i)})\\&\approx\frac{1}{H}\sum_{h=1}^{H}\Big[\sum_{l=1}^L \Big\langle(Q+2SK^{(i)}+K^{(i)\top} RK^{(i)})(X_{j,h}^{(i)}(s_l)\\&\qquad\quad-\frac{1}{H}\sum_{h=1}^{H}X_{j,h}^{(i)}(s_l)]),X_{j,h}^{(i)}(s_l)-\frac{1}{H}\sum_{h=1}^{H}X_{j,h}^{(i)}(s_l)]\Big\rangle\Big].\end{aligned}\]
Moreover, $\cJ(t,x_j;K^{(i)},X_j^{(i)})$ and $\mathcal{I}_{\bf X}$ can also be computed using samples similar to $\mathcal{J}_0(t,x_j;K^{(i)},X_j^{(i)})$. 

If $N$ is large enough, we can collect enough trajectories such that Assumption \ref{ass4} is satisfied. Then $(\cI_{\mathbf X}\cT)^\top \cI_{\mathbf X}\cT$ and $(\mathcal K_{\mathbf x}\cT)^\top\mathcal K_{\mathbf x}\cT$ have inverse matrices so that $
\left(\cI_{\mathbf X}\cT\right)\vec^+(P^{(i+1)})=\mathbb J_0$
admits a unique solution 
\begin{equation}\label{N-solution-P}
\vec^+(P^{(i+1)})=\big[\left(\cI_{\mathbf X}\cT\right)^\top \cI_{\mathbf X}\cT\big]^{-1}\left(\cI_{\mathbf X}\cT\right)^\top\mathbb J_0.
\end{equation}
Similarly, $(\cK_{\mathbf x}\cT)\vec^+(\hatP^{(i+1)})=\mathbb J$
admits a unique solution
\begin{equation}\label{N-solution-hatP}
	\vec^+(\hatP^{(i+1)})=\big[(\cK_{\mathbf x}\cT)^\top(\cK_{\mathbf x}\cT)\big]^{-1}(\cK_{\mathbf x}\cT)^\top\mathbb J.
\end{equation}

Finally, we can obtain $P^{(i+1)}$ and $\hatP^{(i+1)}$ by taking the inverse map of $\vec^+(\cdot)$.

\subsection{A Numerical Example}
We are ready to implement Algorithm \ref{algorithm} to solve system \eqref{MF-system}. For comparison, we calculate the numerical example with $n=5$ and $m=2$ at the initial time $t=0$ in \cite{Huang-Li-Yong-2015} by Algorithm \ref{algorithm}. To save space, the coefficients in the system \eqref{MF-system} and cost functional \eqref{Sec2_Cost} are cited from \cite{Huang-Li-Yong-2015} directly. It is worth pointing out that \cite{Huang-Li-Yong-2015} used all of the coefficients in system \eqref{MF-system} to calculate the solution $(P,\hatP)$. By contrast, we calculate $(P,\hatP)$ without knowing $A$ and $\hatA$. 

We randomly chose more than $\frac{5\times 6}{2}=15$ initial state values according to the uniform distribution on $[0,20]$ such that Assumption \ref{ass4} is satisfied. Then we run system \eqref{MF-system} with $(K^{(0)},\hatK^{(0)})$ by observing the trend of state trajectories when $t$ grows to find an initial stabilizer. Here, we use the Monte Carlo method to simulate the trajectories of the original state with $(K^{(0)},\widehat K^{(0)})$ as follows
\begin{equation*}
\left\{
\begin{aligned}
&X(s+\Delta s)\\&=X(s)+\Big[\big(A+BK\big)\big(X(s)-\bar X(s)\big)+\big(\widehat A+\widehat B\widehat K\big)\bar X(s)\Big]\Delta s \\
& ~~~+\Big\{ \big(C+DK\big)\big(X(s)-\bar X(s)\big)+\big(\widehat C+\widehat D\widehat K\big)\bar X(s)\Big\} \Delta W(s),\\
& \bar X(s+\Delta s) =\bar X(s)+\Big(\widehat A+\widehat B\widehat K\Big)\bar X(s)\Delta s,\quad s\in [t,\infty),
\end{aligned}
\right.
\end{equation*}
where the time interval $\Delta s=0.01$ and $\Delta W(s)= Z\sqrt{\Delta s}$ with $Z$ being the standard normal distribution. 

\begin{figure*}[htbp]
\centering

\begin{minipage}[b]{1\textwidth}
\subfigure[]{\includegraphics[width=0.33\textwidth]{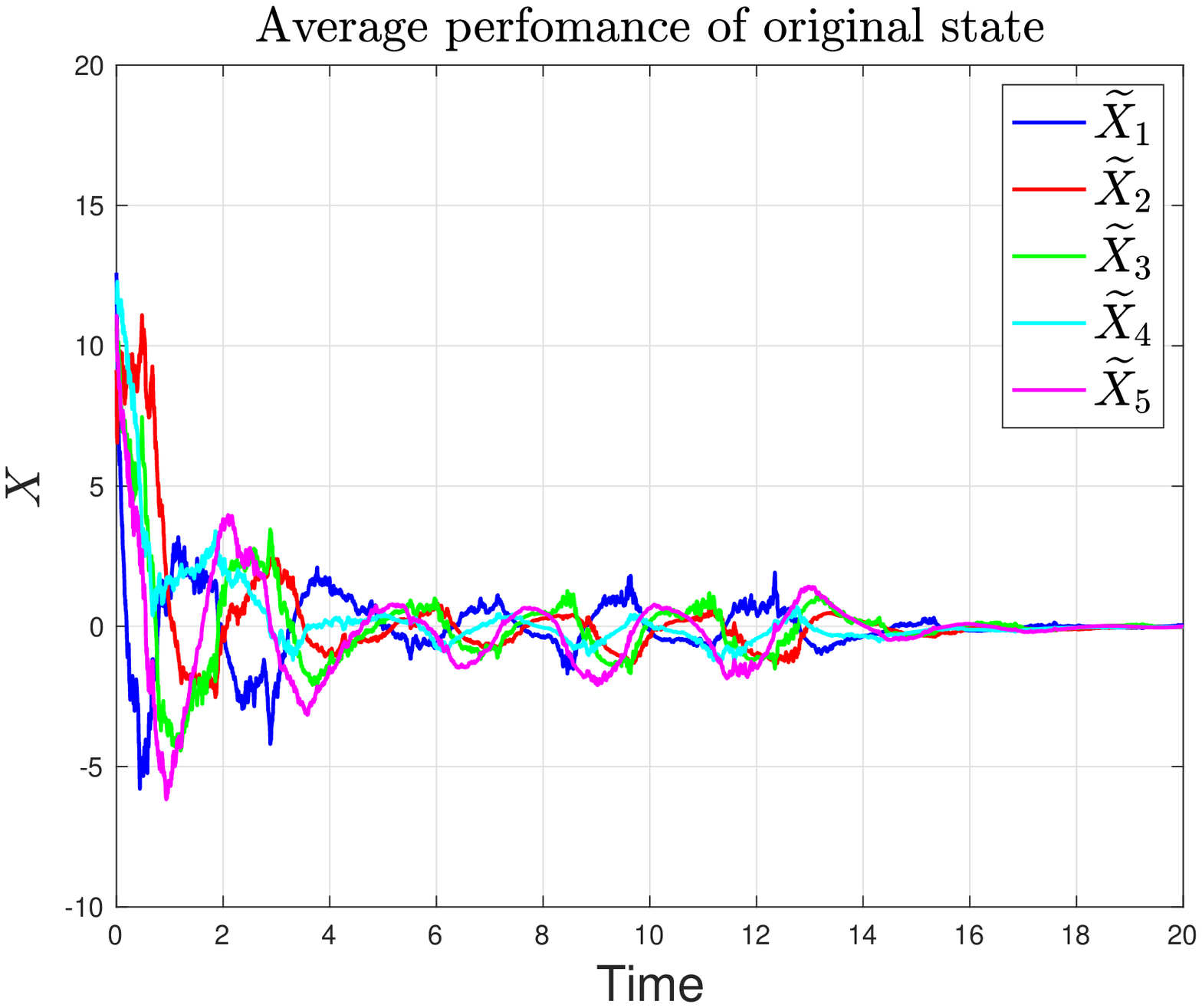}}\hspace{0.02in}
\subfigure[]{\includegraphics[width=0.33\textwidth]{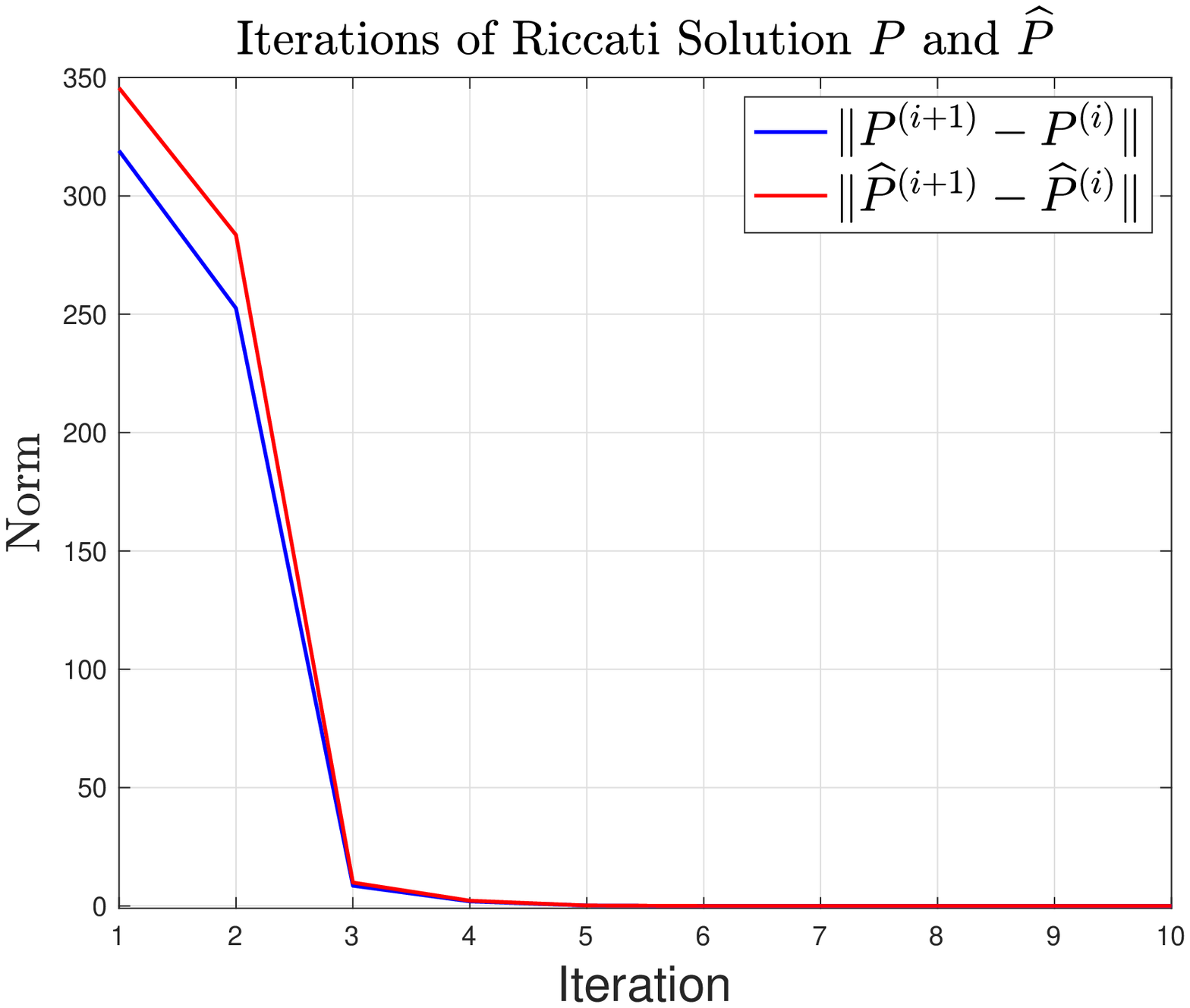} }\vspace{0.02in}
\subfigure[]{\includegraphics[width=0.33\textwidth]{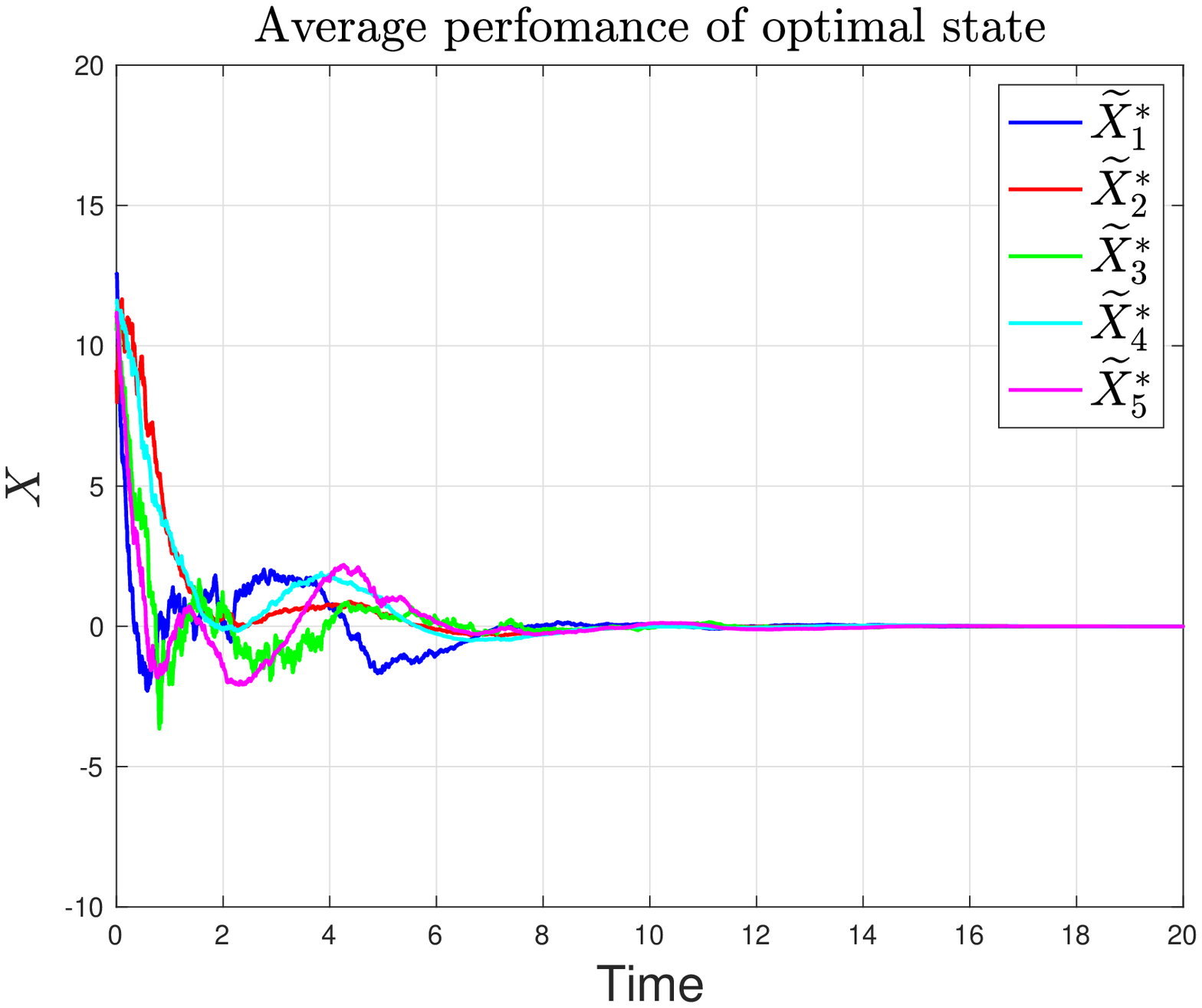} }\vspace{0.02in} 
\end{minipage}
\caption{Simulation results for solutions. (a): The mean value of $15$ original state trajectories of $X$ running with the initial stabilizer $(K^{(0)},\hatK^{(0)})$; (b): The variation of the differences between $P^{(i+1)}$ and $P^{(i)}$, $\widehat P^{(i+1)}$ and $\widehat P^{(i)}$ in each iteration; (c): The mean value of the $15$ pre-commitment optimal state trajectories of $X$ running with the pre-commitment optimal control $u^*$.}
\label{fig:subfig} 
\end{figure*}

We find
\[
K^{(0)}=\begin{bmatrix} 
-1 & 1 & 1 & 1 & -2 \\
1 & 1 & 1 & -1 & -1
\end{bmatrix},~\hatK^{(0)}=\begin{bmatrix} 
-1 & 1 & 1 & -1 & -1 \\
1 & 1 & 1 & -1 & -1
\end{bmatrix}
\] 
can make the trajectories tend to zero when $t$ grows.
Therefore, we choose them as the initial stabilizer. 
Fig. 1 (a) shows the mean value of the $15$ state trajectories running by $(K^{(0)},\widehat K^{(0)})$ in system \eqref{MF-systemK} with different initial states presented above.

Here, we set $H=100000$ and $L=\frac{20}{0.01}=2000$ in this example. Following Algorithm \ref{algorithm}, we update the policy to reinforce the objective functions to obtain $(P,\hatP)$. From \eqref{N-solution-P} and \eqref{N-solution-hatP}, we obtain 
{\small\[
P=\left[\begin{array}{rrrrr}
0.41512 & 0.38898 & 0.20676 & 0.01616 & -0.40586 \\
0.38898 & 2.72081 & 1.90975 & -2.60739 & -0.77561 \\
0.20676 & 1.90975 & 1.85354 & -1.83304 & -0.89785 \\
0.01616 & -2.60739 & -1.83304 & 4.24034 & -0.26645 \\
-0.40586 & -0.77561 & -0.89785 & -0.26645 & 2.15374 \\
\end{array}\right],
\] }
{\small\[
\hatP=\left[\begin{array}{rrrrr}
0.61469 & 0.57206 & 0.26440 & -0.14555 & -0.61375 \\
0.57206 & 4.25787 & 2.87065 & -4.41580 & -0.65355 \\
0.26440 & 2.87065 & 2.67583 & -2.66534 & -1.08896 \\
-0.14555 & -4.41580 & -2.66534 & 6.81576 & -1.06741 \\
-0.61375 & -0.65355 & -1.08896 & -1.06741 & 3.16407 \\
\end{array}\right]
\] 
using 11 iterations for $P$ and $\hatP$. In Fig. 1 (b), we present values of $\|P^{(i+1)}-P^{(i)}\|$ and $\|\widehat P^{(i+1)}-\widehat P^{(i)}\|$ in each iteration to illustrate the variation of the differences of $P^{(i+1)}$ and $P^{(i)}$, $\widehat P^{(i+1)}$ and $\widehat P^{(i)}$.

To check whether $(P,\hatP)$ is the solution of GAREs, we define the left sides of \eqref{Riccati_Eq1} and \eqref{Riccati_Eq2} as ${\cR}(P)$ and $\widehat {\cR}(P,\hatP)$, respectively. Inserting $(P,\hatP)$ into them, we obtain $\|\cR(P)\|=3.2474\times10^{-5}$ and $\|\widehat {\cR}(P,\hatP)\|=8.7015\times10^{-5}$, which means that the solution has the high accuracy. In contrast to the result in Huang {\it et al.} \cite{Huang-Li-Yong-2015}, our algorithm is implemented without the information of $A$ and $\hatA$. 
Also, the pre-commitment optimal control is $u^*=K\big(X^* -{\E_t} [X^*] \big)
+\hatK {\E_t} [X^*]$ with 

\[
K=\begin{bmatrix}
-0.35740 & 0.02276 & 0.16816 & -0.11020 & -0.13008 \\
0.21879 & 0.60719 & 0.67927 & -0.77177 & -0.31577 
\end{bmatrix},
\]
\[
\hatK=\begin{bmatrix}
-0.37378 & 0.10978 & 0.28080 & -0.01921 & -0.17809 \\
0.19252 & 0.47965 & 0.60263 & -0.62934 & -0.29326 
\end{bmatrix}.
\]
Similar to the simulation of the original state trajectories, the mean value of the $15$ pre-commitment optimal trajectories running by $(K,\hatK)$ in the system \eqref{MF-systemK} is shown in Fig. 1 (c). Comparing to the original state with $(K^{(0)},\hatK^{(0)})$, we can see that the pre-commitment optimal state converges to zero more quickly than the original state in Fig. 1 (a).

\section*{Acknowledgment}

The authors would like to thank Prof. Jongmin Yong for many helpful discussions and suggestions.
They also thank the associate editor and anonymous referees for their valuable comments and suggestions for improving the current version of the paper. 

\par


\begin{thebibliography}{00}

\bibitem{Rami-Zhou-2000}
M. Ait Rami and X. Y. Zhou, ``Linear matrix inequalities, Riccati equations, and indefinite stochastic linear quadratic controls", {\em IEEE Trans. Automat. Contr.}, vol. 45, pp. 1131-1143, 2000.


%

\bibitem{Bian-Jiang-Jiang-2016}
T. Bian, Y. Jiang and Z. P. Jiang, ``Adaptive dynamic programming for stochastic systems with state and control dependent noise", {\em IEEE Trans. Automat. Contr.}, vol. 61, pp. 4170-4175, 2016.


\bibitem{Bittanti-Laub-Willems-1991}
S. Bittanti, A. J. Laub, and J. C. Willems, {\em The Riccati Equation}. Germany: Springer-Verlag, 1991.


\bibitem{Bismut-1976}
J. M. Bismut, ``Linear quadratic optimal control with random coefficients," {\em SIAM J. Contr. Optim.}, vol. 14, pp. 419-444, 1976.
\bibitem{Bradtke-Ydestie-Barto-1994}
S. J. Bradtke, B. E. Ydestie and A. G. Barto, ``Adaptive linear quadratic control using policy iteration," in Proc. Amer. Control Conf., pp. 3475-3476, 1994.



\bibitem{Bensoussan-Frehse-Yam-2013}
A. Bensoussan, J. Frehse and P. Yam, {\em Mean Field Games and Mean Field Type Control Theory}, Springer Briefs in Mathematics, Springer, 2013. 




\bibitem{Chen-Qu-Tang-Low-Li-2022}
X. Chen, G. Qu, Y. Tang, S. Low and N. Li, ``Reinforcement Learning for Selective Key
Applications in Power Systems: Recent
Advances and Future Challenges", {\it IEEE Trans. Smart Grid}, vol. 13, pp. 2935-2958, 2022.

\bibitem{Du-Meng-Zhang-2020}
K. Du, Q. Meng and F. Zhang, ``A Q-learning algorithm for discrete-time linear-quadratic control with random parameters of unknown distribution: Convergence and stabilization", vol. 60, pp.1991-2015, 2022.

\bibitem{Elie et al.-2020}
R. {\'E}lie, J. P{\'e}rolat, M. Lauri{\`e}re, M. Geist, and O. Pietquin, 
``On the Convergence of Model Free Learning in Mean Field Games", {\em Proceedings of the Thirty-Fourth AAAI Conference on Artificial Intelligence}, vol. 34, pp. 7143-7150, 2020.



\bibitem{Gao-Xu-Zhou-2022}
X. Gao, Z. Q. Xu and X. Y. Zhou, ``State-dependent temperature control for Langevin diffusions", {\em SIAM J. Control Optimiz.}, vol. 60, pp. 1250-1268, 2022. 

\bibitem{Huang-Li-Yong-2015}
J. Huang, X. Li and J, Yong, ``A Linear-quadratic optimal control problem for mean-field stochastic differential equations in infinite horizon", {\em Math. Control Relat. F.}, vol. 5, pp. 97-139, 2015.


\bibitem{Huang-Caines-Malhame-2007}
M. Huang, P. E. Caines and Malhamé, R. P., ``Large-population cost-coupled LQG problems with nonuniform agents: Individual-mass behavior and decentralized $\varepsilon$-Nash equilibria", {\em IEEE Transactions on Automatic Control}, vol. 52, pp. 1560-1571, 2007.



\bibitem{Lasry-Lions}
J. M. Lasry and P. L. Lions, ``Mean field games", {\em JPN J. Math.}, vol. 2, pp. 229-260, 2007.

\bibitem{Lauriere et al.-2022}
 M. Lauri{\`e}re, S. Perrin, S. Girgin, P. Muller, A. Jain, T. Cabannes, G. Piliouras, J. P{\'e}rolat, R. {\'E}lie, O. Pietquin, and M. Geist,
``Scalable Deep Reinforcement Learning Algorithms for Mean Field Games", {\em Proceedings of the 39th International Conference on Machine Learning}, vol. 162, pp. 12078-12095, 2022. 

\bibitem{Lewis-Vrabie-Vamvoudakis-2012}
F. L. Lewis, D. Vrabie and K. G. Vamvoudakis, ``Reinforcement learning and Feedback Control", {\em IEEE Contr. Syst. Mag.}, vol. 32, pp.76-105, 2012.

\bibitem{Li-Shi-Yong-2021}
X. Li, J. Shi and J. Yong, ``Mean-field linear-quadratic stochastic differential games in an infinite horizon", {\it ESAIM Contr. Optim. Ca.}, vol. 27, pp. 1-39, 2021.

\bibitem{Li-Li-Peng-Xu-2022}
N. Li, X. Li, J. Peng and Z. Q. Xu, ``Stochastic linear quadratic optimal control problem: A reinforcement learning method", {\em IEEE Trans. Automat. Contr.}, vol. 67, pp. 5009-5016, 2022.




%
\bibitem{Minsky-1954}
M. L. Minsky, ``Theory of neural-analog reinforcement systems and its application to the brain model problem," Ph.D. dissertation, Princeton University, 1954. 



\bibitem{Murray-Cox-Lendaris-Saeks-2002}
J. J. Murray, C. J. Cox, G. G. Lendaris and R. Saeks, ``Adaptive dynamic
programming", {\em IEEE T. Syst. Man Cy-S}, vol. 32, pp. 140-153, 2002.



\bibitem{Perrin et al.-2021}
S. Perrin, M. Lauri{\`e}re, J. P{\'e}rolat, M. Geist, R. {\'E}lie, O. Pietquin.
``Mean Field Games Flock! The Reinforcement Learning Way", {\em Proceedings of the Thirtieth International Joint Conference on Artificial Intelligence}, pp. 356-362, 2021.


\bibitem{Sun-Yong-2018}
J. Sun and J. Yong, ``Stochastic linear quadratic optimal control problems in infinite horizon", {\em Appl. Math. Optim.}, vol. 78, pp. 145-183, 2018.




\bibitem{Wang-Zariphopoulou-Zhou-2019}
H. Wang, T. Zariphopoulou and X. Y. Zhou, ``Reinforcement learning in continuous time and space: A stochastic control approach", {\em Journal of Machine Learning Research}, vol. 21, pp. 1-34, 2020.

\bibitem{Wang-Zhou-2019}
H. Wang and X. Y. Zhou, ``Continuous-time mean–variance portfolio selection: A reinforcement learning framework", {\em Mathematical Finance}, vol. 30, pp. 1273-1308, 2020.

\bibitem{Xu-Shen-Huang-2023}
Z. Xu, T. Shen, and M. Huang. ``Model-free policy iteration approach to NCE-based strategy design for linear quadratic Gaussian games", {\em Automatica}, vol. 155, 111162, 2023.

\bibitem{Yong-2013} 
J. Yong, ``Linear-quadratic optimal control problems for mean-field stochastic differential equations", {\em SIAM J. Control Optimiz.}, vol. 51, pp. 2809-2838, 2013.

\bibitem{Yong-2017}
J. Yong, ``Linear-quadratic optimal control problems for mean-field stochastic differential equations-time-consistent solutions", {\em T. Am. Math. Soc.}, vol. 369, pp. 5467-5523, 2017.

\bibitem{Yong-Zhou-1999}
J. Yong and X. Y. Zhou, \sl Stochastic controls: Hamiltonian systems
and HJB equations, \rm Applications of Mathematics (New York), 43,
Springer-Verlag, New York, 1999.


\end{thebibliography}
\end{document}